\numberwithin{equation}{section}
\theoremstyle{plain}
\newtheorem{theorem}{Theorem}[section]
\newtheorem{lemma}[theorem]{Lemma}
\newtheorem{corollary}[theorem]{Corollary}
\newtheorem{proposition}[theorem]{Proposition}
\newtheorem{conjecture}[theorem]{Conjecture}
\theoremstyle{definition}
\newtheorem{definition}[theorem]{Definition}
\newtheorem{example}[theorem]{Example}
\theoremstyle{remark}
\newtheorem{case[theorem]}{Case}
\def\bb #1{ {\mathbb #1} }
\title{Tiling, circle packing and exponential sums over finite fields }
\author{C. D. Haessig, A. Iosevich, J. Pakianathan, S. Robins and L. Vaicunas}
\begin{document}
\maketitle

\begin{abstract} We study the problem of tiling and packing in vector spaces over finite fields, its connections with zeroes of classical exponential sums, and with the Jacobian conjecture.  \end{abstract} 

\tableofcontents

\section{Introduction}
\subsection{Tilings}

Tiling is one of the most diverse and ubiquitous concepts of modern mathematics. In Euclidean space, we say that a domain $\Omega \subset {\Bbb R}^d$ tiles by $A \subset {\Bbb R}^d$ if 
$$ \sum_{a \in A} \Omega(x-a)=1 \ \text{for a.e.} \ x \in {\Bbb R}^d,$$ where $\Omega(x)$ denotes the characteristic function of 
$\Omega$. Much work has been done over the years on this subject matter in a variety of contexts. See for example, \cite{A05}, \cite{ER61}, \cite{M80} and the references contained therein. 

The purpose of this paper is to study tiling in vector spaces over finite fields. Let $E \subset {\Bbb F}_q^d$, the $d$-dimensional vector space over the finite field with $q$ elements. 

\begin{definition} \label{ktilingdef} Let $A \subset {\Bbb F}_q^d$. We say that $E$ $k$-tiles ${\Bbb F}_q^d$ by $A$ if 
$$ \sum_{a \in A} E(x-a)=k \ \text{for every} \ x \in {\Bbb F}_q^d.$$ 
\end{definition} 

Note that $k$-tiling means that each point in ${\Bbb F}_q^d$ is covered exactly $k$ times by some translate of the set $E$.  There is an immediate duality here that follows from the definition: $E$ $k$-tiles ${\Bbb F}_q^d$ by $A$ if and only if  $A$ $k$-tiles ${\Bbb F}_q^d$ by $E$.  For some of the historical perspectives and the growing body of work on multiple tilings, we refer the reader to \cite{G00}, \cite{GKRS13}, \cite{GRS12}, and \cite{R79}.

We first provide a simple characterization of $k$-tiling in terms of the coefficients of the Fourier transform. Given $U \subset {\Bbb F}_q^d$, $|U|$ denotes the number of elements in $U$. 

\begin{theorem} (Fourier Characterization) \label{characterization} 
The set $E \subset {\Bbb F}_q^d$ multi-tiles ${\Bbb F}_q^d$ by $A \subset {\Bbb F}_q^d$ at level $k$ if and only if 
$$ |A| \cdot |E|=kq^d$$ and 
$$ \widehat{E}(m) \cdot \widehat{A}(m)=0 \ \text{for every} \ m \not=\vec{0}.$$ 

\bigskip \noindent
Here and throughout, given $f: {\Bbb F}_q^d \to {\Bbb C}$, we recall the discrete Fourier transform
$$ \widehat{f}(m) \equiv q^{-d} \sum_{x \in {\Bbb F}_q^d} \chi(-x \cdot m) f(x),$$ where $\chi$ is a principal additive character on 
${\Bbb F}_q$. 
\end{theorem}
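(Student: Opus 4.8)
The plan is to reinterpret the $k$-tiling condition as the assertion that a certain convolution equals the constant function $k$, and then transfer this to the Fourier side, where convolution becomes (normalized) multiplication and a constant function has a one-point spectrum. Writing $E$ and $A$ also for the characteristic functions of the respective sets, I would introduce the convolution $(E * A)(x) = \sum_{y \in {\Bbb F}_q^d} E(x-y) A(y) = \sum_{a \in A} E(x-a)$, so that $E$ $k$-tiles ${\Bbb F}_q^d$ by $A$ precisely when $(E * A)(x) = k$ for every $x \in {\Bbb F}_q^d$.

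First I would establish the convolution identity on the Fourier side. A direct change of variables $u = x - y$ in the defining sum shows that, with the normalization $\widehat{f}(m) = q^{-d}\sum_x \chi(-x\cdot m) f(x)$, one has $\widehat{E * A}(m) = q^d\, \widehat{E}(m)\, \widehat{A}(m)$; the single factor of $q^d$ is the only bookkeeping subtlety and is forced by the chosen normalization. Next I would compute the Fourier transform of the constant function $g \equiv k$: by orthogonality of additive characters, $\sum_x \chi(-x\cdot m)$ equals $q^d$ when $m = \vec{0}$ and $0$ otherwise, so $\widehat{g}(\vec{0}) = k$ and $\widehat{g}(m) = 0$ for all $m \neq \vec{0}$.

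Because the discrete Fourier transform on ${\Bbb F}_q^d$ is injective (Fourier inversion), the pointwise identity $E * A \equiv k$ is equivalent to the equality $\widehat{E*A}(m) = \widehat{g}(m)$ for every $m$. Splitting into the two regimes then finishes the argument: for $m \neq \vec{0}$ this reads $q^d\, \widehat{E}(m)\widehat{A}(m) = 0$, which is exactly the stated orthogonality condition $\widehat{E}(m)\widehat{A}(m) = 0$; and at $m = \vec{0}$, using $\widehat{E}(\vec{0}) = q^{-d}|E|$ and $\widehat{A}(\vec{0}) = q^{-d}|A|$, the identity $q^d\,\widehat{E}(\vec{0})\widehat{A}(\vec{0}) = k$ collapses to $|A|\cdot|E| = kq^d$.

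I do not anticipate a serious obstacle here: the statement is essentially the convolution theorem combined with the trivial spectrum of a constant function, so the only care required lies in tracking the normalization constant $q^d$ and in invoking Fourier inversion to pass between the spatial and frequency descriptions. The one point worth double-checking is that the two displayed conditions are genuinely independent, so that neither one alone forces the covering level to be the constant $k$ — which is precisely why the $m = \vec{0}$ equation must be retained and recorded separately as the cardinality relation rather than absorbed into the orthogonality statement.
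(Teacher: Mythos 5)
Your proof is correct and takes essentially the same route as the paper's: both recast $k$-tiling as the statement that the convolution $E*A$ is the constant function $k$ and pass to the Fourier side, the only difference being that the paper extracts the vanishing of $\widehat{E}(m)\,\widehat{A}(m)$ for $m \neq \vec{0}$ by applying Plancherel to the difference function $D(x)=\sum_{m\neq\vec{0}}\chi(x\cdot m)\widehat{A}(m)\widehat{E}(m)\equiv 0$, while you invoke injectivity of the Fourier transform directly --- an inessential variation. Your write-up has the minor merit of presenting both implications as a single chain of equivalences, whereas the paper details only the forward direction.
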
 

\vskip.125in 

\noindent
In order to illustrate Theorem \ref{characterization}, let's consider the following examples. 

\begin{example} \label{graph} Suppose that $E=\{(t,f(t)): t \in {\Bbb F}_q \}$, where $f: {\Bbb F}_q \to {\Bbb F}_q$ is arbitrary. Then it is clear that $E$ tiles by $A=\{(0,c): c \in {\Bbb F}_q\}$. To see how this fits in with Theorem \ref{characterization}, observe that 
$$ \widehat{A}(m)=q^{-2} \sum_{c \in {\Bbb F}_q} \chi(-cm_2).$$ 

It follows directly from the orthogonality relations for characters that this quantity is $0$ if $m_2 \not=0$  and $q^{-1}$ if $m_2=0$. In other words, $\widehat{A}(m)$ is non-zero on 
$\{(t,0): t \in {\Bbb F}_q\}$ and $0$ everywhere else. On the other hand, 
$$ \widehat{E}(m_1,0)=q^{-2} \sum_{x_1 \in {\Bbb F}_q} \chi(-x_1m_1)=0 \ \text{if} \ m_1 \not=0.$$ 

Therefore  $\widehat{E}(m) \cdot \widehat{A}(m)=0$ if $m \not=\vec{0}$. 

\vskip.125in 

This example easily generalizes to sets in ${\Bbb F}_q^d$ of the form 
$$ E=\{(x_1, x_2, \dots, x_j, f_1(\vec{x}), \dots, f_{d-j}(\vec{x}) \}, $$ where $\vec{x}=(x_1, \dots, x_j)$. 

\end{example} 

\begin{example} \label{subspace} Let $H_j$ denote a $j$-dimensional affine subspace of ${\Bbb F}_q^d$, $1 \leq j \leq d-1$. By elementary linear algebra, $H_j$ tiles by $H_j^{\perp}$ at level $1$. Let us see how this fits in with Theorem \ref{characterization}. We have 
\begin{equation} \label{subspaceft} \widehat{H}_j(m)=q^{-(d-j)} H_j^{\perp}(m), \end{equation} where $H_j^{\perp}$ is the orthogonal subspace. It follows that 
$$ \widehat{H}_j(m) \widehat{H_j}^{\perp}(m)=q^{-(d-j)} \cdot q^{-j} \cdot H_j^{\perp}(m) \cdot H_j(m)=0 \ \text{if} \ m \not=\vec{0}.$$ 
\end{example} 

\vskip.125in 

\begin{example} \label{multitileex} Let $p$ be a prime and let $E=\{0,1,2, \dots, k-1\}$, $k \ge 2$. Let $A={\Bbb Z}_p$. Then $E$ $k$-tiles 
${\Bbb Z}_p$ by $A$ and does not $j$-tile ${\Bbb Z}_p$ for any $j<k$. It is instructive to note here that since $A={\Bbb Z}_p$ and $A$ is viewed as a subset of ${\Bbb Z}_p$, $\widehat{A}$ vanishes identically away from $0$. \end{example} 

\vskip.125in 

The examples above give us some hints about characterization of $1$-tiings, which we now proceed to classify. 

\begin{definition} \label{graphtilingdef} Suppose that $E$ $1$-tiles ${\Bbb F}_q^d$ by $A$. We say that $E \subset {\Bbb F}_q^d$ is a {\bf graph} if it there is a coordinate decomposition 
$$ {\Bbb F}_q^d = {\mathbb F}_q^s \oplus \mathbb{F}^{d-s}_q =\{ (x_1, \dots, x_s, y_1, \dots, y_{d-s}) \}$$ such that 
$$E = \{ (\hat{x}, f(\hat{x}): \hat{x} \in {\Bbb F}_q^s \} = Graph(f)$$ for some function $f: \mathbb{F}_q^s \to \mathbb{F}_q^{d-s}$ with respect to these coordinates. 

Note given such an $E=Graph(f)$, we can come up with sets $E,A$ such that $E$ $1$-tiles ${\Bbb F}_q^d$ by $A$. Here 
$$A=\vec{0} \oplus \mathbb{F}_q^{d-s} = \left\{ (\vec{0},\hat{y}): \hat{y} \in {\Bbb F}_q^{d-s} \right\},$$ so all such graphs give $1$-tilings. \end{definition}

As some of our results only apply for prime fields $\mathbb{F}_p$ where $p$ is a prime, we should point out that any tiling pair $(E,A)$ 
in $\mathbb{F}_q^d$ with $q=p^{\ell}$ can also be viewed as a tiling pair in $\mathbb{F}_p^{d\ell}$ as $\mathbb{F}_q \cong \mathbb{F}_p^\ell$ as additive groups. 
Thus as tiling only uses the additive structure of the ambient space, it is more informative to work only in vector spaces $V$ over the prime fields $\mathbb{F}_p$, as the dimension of $V$ over $\mathbb{F}_p$ more accurately characterizes structural questions regarding tilings than the dimension of $V$ over other finite fields. 

The following is a characterization of $1$-tilings in ${\Bbb F}_p^2$ when $p$ is a prime. 

\begin{theorem}[Classification of $1$-tilings of the plane] \label{1tilecharacterization} Let $q=p$, a prime and suppose that $E$ $1$-tiles ${\Bbb F}_p^2$ by $A$. Then $E$ is a graph. Thus either $|E|=1$ and $A={\Bbb F}_p^2$, $E={\Bbb F}_p^2$ or $|A|=1$, or there is a function $f: {\mathbb F}_p \to {\Bbb F}_p$ such that $E=Graph(f)$ with respect to some choice of coordinate axis decomposition for ${\mathbb F}_p^2$. Note the function $f$ can always be given by a polynomial of degree at most $p-1$.
\end{theorem}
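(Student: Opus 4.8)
The plan is to push everything through the Fourier characterization of Theorem \ref{characterization} and reduce the statement to a single structural lemma about when a Fourier coefficient of a $p$-element set vanishes. First I would dispose of the degenerate cases by cardinality. Since $|E|\cdot|A| = k q^d = p^2$ with $k=1$, $q=p$, $d=2$, and $p$ is prime, the only possibilities are $(|E|,|A|) \in \{(1,p^2),\,(p,p),\,(p^2,1)\}$. The first forces $|E|=1$ (a single point, the graph of a constant map $\mathbb{F}_p^0 \to \mathbb{F}_p^2$) and $A=\mathbb{F}_p^2$, while the third forces $E=\mathbb{F}_p^2$ and $|A|=1$; these are exactly the listed trivial cases. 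All the content lives in the middle case $|E|=|A|=p$.

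The key lemma I would establish is: for $E\subset\mathbb{F}_p^2$ with $|E|=p$ and $m\neq\vec 0$, one has $\widehat{E}(m)=0$ if and only if $E$ meets each coset of the line $m^{\perp}=\{x: x\cdot m=0\}$ in exactly one point, equivalently $E$ is a graph over the complementary axis with $m^{\perp}$ as the fiber direction. To prove it I would group the character sum by level sets of the map $x\mapsto x\cdot m$, writing $\widehat{E}(m)=p^{-2}\sum_{c\in\mathbb{F}_p} n(c)\,\chi(-c)$, where $n(c)$ counts the points of $E$ on the line $\{x\cdot m=c\}$ and $\sum_c n(c)=p$. Since $\chi$ is nontrivial, the values $\chi(-c)$ run over all $p$-th roots of unity as $c$ ranges over $\mathbb{F}_p$, and the only $\mathbb{Z}$-linear relation among them is the cyclotomic one $\sum_c \chi(-c)=0$, coming from the irreducibility of $\Phi_p(x)=1+x+\cdots+x^{p-1}$ over $\mathbb{Q}$. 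Hence the sum vanishes precisely when all the $n(c)$ are equal; being nonnegative integers summing to $p$ over $p$ levels, they must all equal $1$, which is the transversal (graph) condition.

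With the lemma in hand the theorem is immediate. The tiling hypothesis, via Theorem \ref{characterization}, gives $\widehat{E}(m)\,\widehat{A}(m)=0$ for every $m\neq\vec 0$. Because $A$ has $p$ elements with $0<p<p^2$, the indicator of $A$ is nonconstant, so $\widehat{A}$ cannot vanish identically off the origin: otherwise Fourier inversion would make $A$ equal to the constant $\widehat{A}(\vec 0)=p^{-2}|A|=1/p\notin\{0,1\}$. Choosing $m_0\neq\vec 0$ with $\widehat{A}(m_0)\neq 0$ then forces $\widehat{E}(m_0)=0$, and the lemma identifies $E$ as a graph with respect to the coordinate decomposition having $m_0^{\perp}$ as one axis. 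Finally, the closing assertion follows because any function $\mathbb{F}_p\to\mathbb{F}_p$ agrees with a unique polynomial of degree at most $p-1$, by Lagrange interpolation (equivalently, since $x^p=x$ on $\mathbb{F}_p$).

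I expect the main obstacle to be the key lemma, and in particular its reliance on primality. The equivalence ``vanishing coefficient $\Leftrightarrow$ uniform distribution across the parallel lines'' rests entirely on the fact that the only vanishing integer combination of $p$-th roots of unity is the uniform one; for composite modulus this fails (there exist non-uniform vanishing sums of roots of unity), which is precisely why this clean classification is confined to prime fields. I would therefore state the vanishing-sums fact as the crux, invoking the irreducibility of $\Phi_p$, rather than attempting a direct combinatorial argument about how $E$ distributes across directions.
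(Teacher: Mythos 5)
Your proposal is correct and follows essentially the same route as the paper: your key lemma is exactly the paper's Proposition~\ref{proposition: equivalentFouriervanish} (vanishing of $\widehat{E}(m)$ iff $E$ equidistributes on the lines $x \cdot m = t$, proved via irreducibility of $1+x+\cdots+x^{p-1}$ over $\mathbb{Q}$), and the remaining steps---the cardinality split, producing $m_0 \neq \vec{0}$ with $\widehat{A}(m_0) \neq 0$ and hence $\widehat{E}(m_0)=0$, and Lagrange interpolation for the polynomial claim---all match the paper's argument. The only cosmetic difference is that the paper, insisting on taking $e_1 = m_0$ as a basis vector, splits into the cases $m_0 \cdot m_0 \neq 0$ (orthogonal basis) and $m_0 \cdot m_0 = 0$ (isotropic/hyperbolic basis) to build the coordinate system, whereas your choice of an arbitrary basis adapted to the fiber direction $m_0^{\perp}$ sidesteps that case distinction.
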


\vskip.125in 

A similar argument yields a characterization of $k$-tilings for ${\Bbb F}_p^2$ when $p$ is a prime. 
\begin{theorem}[Classification of $k$-tilings of the plane $\mathbb{F}_p^2$ ] \label{ktilecharacterization} Let $q=p$, a prime and suppose that $E$ $k$-tiles ${\Bbb F}_p^2$.Then either $|E|=k$ and $A={\Bbb F}_p^2$, $E={\Bbb F}_p^2$ and $|A|=k$, or 
$|E|=sp$ for some $1 \leq s \leq p-1$ such that $s$ divides $k$ and $E$ is the union of $s$ disjoint graphs 
$Graph(f_1), \dots, Graph(f_s)$.
\end{theorem}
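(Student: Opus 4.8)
The plan is to read off everything from the Fourier characterization of Theorem~\ref{characterization}, the one genuinely arithmetic input being a rigidity property of vanishing sums of $p$-th roots of unity (this is exactly where $q=p$ prime is used). First I would establish the following dictionary between a single vanishing frequency and a geometric equidistribution statement. Fix $m \ne \vec 0$ and sort the points of $E$ by the value of the linear form $x \mapsto x\cdot m$: writing $N_c = |E \cap \{x : x\cdot m = c\}|$ for $c \in \mathbb{F}_p$, the definition of the transform gives
$$ \widehat{E}(m) = q^{-2}\sum_{c \in \mathbb{F}_p} N_c\,\chi(-c) = q^{-2}\sum_{c\in\mathbb{F}_p} N_c\,\zeta^{-c}, $$
where $\zeta$ is a fixed primitive $p$-th root of unity. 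Since $p$ is prime, the $p$-th cyclotomic polynomial $1 + X + \cdots + X^{p-1}$ is irreducible over $\mathbb{Q}$, so an integer combination $\sum_c N_c\,\zeta^{-c}$ vanishes if and only if all the $N_c$ are equal. Hence $\widehat{E}(m)=0$ if and only if $E$ meets each of the $p$ parallel lines $\{x : x\cdot m = c\}$ in exactly $|E|/p$ points; in particular this forces $p \mid |E|$. I would note that this condition depends only on the direction $\langle m\rangle \in \mathbb{P}^1(\mathbb{F}_p)$, and that the lines $\{x : x\cdot m = c\}$ are precisely the cosets of the line $W = m^{\perp}$.

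Next I would feed in the tiling hypothesis. By Theorem~\ref{characterization} we have $|E|\,|A| = kp^2$ and $\widehat{E}(m)\widehat{A}(m)=0$ for every $m \ne \vec 0$, so each of the $p+1$ directions in $\mathbb{P}^1(\mathbb{F}_p)$ is ``null'' for $E$ or for $A$ (meaning the corresponding transform vanishes there). If $E$ has no null direction, then $\widehat{E}(m)\ne 0$ for all $m\ne\vec 0$, forcing $\widehat{A}(m)=0$ for all $m\ne\vec 0$; Fourier inversion then makes $A$ constant, hence $A=\mathbb{F}_p^2$ and $|E|=k$. By the symmetric argument, if $A$ has no null direction then $E=\mathbb{F}_p^2$ and $|A|=k$. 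These are exactly the two degenerate alternatives in the statement.

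In the remaining case both $E$ and $A$ possess a null direction, so $p\mid|E|$ and $p\mid|A|$; writing $|E|=sp$ and $|A|=tp$ gives $st=k$, and in particular $s\mid k$. Fixing a null direction $m$ for $E$, the dictionary shows that $E$ meets each coset of $W=m^{\perp}$ in exactly $s$ points. I would then repackage this combinatorially: choosing any line $U$ complementary to $W$ and labelling, in each coset of $W$, its $s$ points by $1,\dots,s$, the collection of $j$-th points across all cosets is the graph of a function $U \to W$ (equivalently, after identifying $U \cong \mathbb{F}_p$ and $W \cong \mathbb{F}_p$, a function $\mathbb{F}_p \to \mathbb{F}_p$). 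These $s$ graphs are pairwise disjoint and partition $E$, exhibiting $E$ as a union of $s$ disjoint graphs relative to the decomposition $\mathbb{F}_p^2 = U \oplus W$. Finally $s\ge 1$ since $E\ne\emptyset$, and $s\le p-1$ since $s=p$ would give $|E|=p^2$, i.e.\ $E=\mathbb{F}_p^2$, already covered by the degenerate case.

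I expect the one step that must be handled carefully to be the rigidity of vanishing sums of $p$-th roots of unity with integer coefficients, as this is the sole place where primality of $q=p$ is used: over a non-prime field $\mathbb{F}_q$ the cyclotomic factorization is more intricate and the clean conclusion ``$\widehat{E}(m)=0 \iff E$ equidistributes along parallel lines'' can fail, so the whole argument is anchored there. By contrast, the translation through Theorem~\ref{characterization} and the repackaging of a single null direction into $s$ disjoint graphs are both routine, and the numerical bookkeeping ($st=k$, $1\le s\le p-1$) follows directly from $|E|\,|A|=kp^2$.
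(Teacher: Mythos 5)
Your proposal is correct and follows essentially the same route as the paper: the cyclotomic-irreducibility rigidity you prove inline is exactly the paper's Proposition~\ref{proposition: equivalentFouriervanish}, your no-null-direction case is the paper's Lemma~\ref{lemma: novanish}, and the partition of $E$ into $s$ disjoint graphs together with the count $|E||A|=kp^2$ matches the paper's argument. The only cosmetic differences are your derivation of $s \mid k$ (from $p$ dividing both $|E|$ and $|A|$, so $st=k$, rather than the paper's $s \mid kp$ combined with $\gcd(s,p)=1$) and your use of an arbitrary complement $U$ of $W=m^{\perp}$, which neatly sidesteps the isotropic case $L=L^{\perp}$ that the paper handles via a separate hyperbolic-basis argument.
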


Theorem~\ref{1tilecharacterization} gives much more structural information  about tiling sets than the simple divisibility condition on their order, i.e.,  
$|E|=1, p$ or $p^2$ in the case when $|E|=p$. This is because most sets of size $p$ are not graphs as the next example explains.

\begin{example} We give an example of $p$ points in $\mathbb{F}_p^2$ which cannot be expressed as $Graph(f)$ for any $f$ or equivalently does not equidistribute on any collection of $p$ parallel lines. Note points equidistribute on $p$ parallel lines if and only if the difference of any two does not lie in a parallel line through the origin. Thus to give examples of $p$ points that do not equidistribute in any direction, it is equivalent to give a collection of points whose pairwise differences generate all directions. Note there are $p+1$ directions in $\mathbb{F}_p^2$ which can be encoded by the slopes of the corresponding lines $\{0,1,\dots, p-1, \infty \}$.

For $p=5$ a collection of $5$ points in $\mathbb{F}_5^2$ which do not equidistribute in any direction is given by

$$ \{ (0,0), (1,1), (2,3), (3,1), (2,4) \} $$ since their pairwise differences generate every possible slope (direction) as the reader can easily check.

Note in general, the number of directions in $\mathbb{F}_p^2$ is $p+1$ while a set of $p$ points has $\binom{p}{2}=\Theta(p^2)$ pairs of distinct points 
so one expects this last example to be ``generic'' amongst subsets of size $p$ in $\mathbb{F}_p^2$, i.e., that most subsets of size $p$ determine all directions and hence are not graphs. Indeed there are $\binom{p^2}{p}=\frac{p^{2}(p^2-1)\dots(p^2-p+1)}{p!}$ sets of size $p$ in $\mathbb{F}_p^2$ while one has at most $(p+1)p^p$ of these  are graphs (choose a direction and then one point per hyperplane in the family of hyperplanes perpendicular to that direction). One can compute
$$
\frac{\binom{p^2}{p}}{(p+1)p^p}=\frac{p(p-\frac{1}{p})(p-\frac{2}{p}) \dots (p-\frac{p-1}{p})}{(p+1)!} \geq \frac{(p-1)^p}{(p+1)!} \to \infty
$$
as $p \to \infty$.
\end{example}

\vskip.125in 

The proof of Theorem~\ref{1tilecharacterization} carries over to $3$-dimensions as long as we weaken the conclusion slightly by showing 
that either $E$ or $A$ is a graph and hence the tiling is graphical (though $E$ might not be the graph).

\begin{theorem}[Classification of $1$-tilings in $3$-space]\label{1tilecharacterization3space} Let $q=p$ a prime and suppose 
$E$ $1$-tiles ${\Bbb F}_p^3$ by $A$. Then $(E,A)$ is a graphical tiling i.e., either $E$ or $A$ is a graph. More generally, if $(E,A)$ is a tiling 
pair in $\mathbb{F}_p^d$ with $|E|=p$ or $p^{d-1}$ then the tiling is graphical.
\end{theorem}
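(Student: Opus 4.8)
The plan is to reduce to the case $|E| = p$ and to show directly that $E$ is a graph; the case $|E| = p^{d-1}$ then follows from the tiling duality noted after Definition \ref{ktilingdef} (the pair $(A,E)$ is again a $1$-tiling, now with $|A| = p$, so $A$ is a graph), while the remaining possibilities $|E| \in \{1, p^d\}$ give a single point or the full space, both of which are graphs. So assume $|E| = p$ and $|A| = p^{d-1}$. First I would record the dictionary between graphs and equidistribution: since $|E| = p$, any decomposition exhibiting $E$ as a graph must take the form $\mathbb{F}_p^d = V \oplus W$ with $\dim V = 1$, which is equivalent to saying that $E$ meets each of the $p$ cosets of some codimension-one subspace $H = W$ exactly once. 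Thus it suffices to produce a single hyperplane $H$ on which $E$ equidistributes.

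For each hyperplane $H$ I would project along $H$ onto the quotient $Q = \mathbb{F}_p^d / H \cong \mathbb{F}_p$ and let $n_H, m_H : Q \to \mathbb{Z}_{\ge 0}$ count the points of $E$ and $A$ in each coset. Summing the tiling identity over a coset of $H$ gives $n_H * m_H \equiv p^{d-1}$ on $Q$, a constant; equivalently, applying the Fourier transform on $Q \cong \mathbb{F}_p$, one gets $\widehat{n_H}(\xi)\,\widehat{m_H}(\xi) = 0$ for every $\xi \neq 0$. The arithmetic heart of the argument is the rigidity of $\mathbb{Z}_p$: if $g : \mathbb{Z}_p \to \mathbb{Q}$ satisfies $\widehat{g}(\xi_0) = 0$ for even a single $\xi_0 \neq 0$, then $g$ is constant. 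Indeed $\widehat{g}(\xi_0)$ is a rational combination $\sum_j c_j \omega^{j}$ of the $p$-th roots of unity $\omega^j$, and since $\Phi_p(x) = 1 + x + \cdots + x^{p-1}$ is irreducible over $\mathbb{Q}$ the only such vanishing relation is the one with all $c_j$ equal. Applying this to $n_H$ (in contrapositive form: if $n_H$ is non-constant then $\widehat{n_H}$ never vanishes off $\vec{0}$) yields the dichotomy: for each $H$, either $n_H \equiv 1$, in which case $E$ equidistributes on the cosets of $H$ and is therefore a graph, or else $m_H \equiv p^{d-2}$, in which case $A$ equidistributes on the cosets of $H$.

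To finish I would run this dichotomy over all hyperplanes. If some $H$ gives $n_H \equiv 1$ we are done. Otherwise $A$ equidistributes on the cosets of every hyperplane $H$. Translating back through the same projection dictionary, equidistribution of $A$ on the cosets of $H$ is exactly the statement that $\widehat{A}(m) = 0$ for all $m \in H^{\perp} \setminus \{\vec{0}\}$, and as $H$ ranges over all hyperplanes the lines $H^{\perp}$ exhaust $\mathbb{F}_p^d$. Hence $\widehat{A}(m) = 0$ for every $m \neq \vec{0}$, and Fourier inversion forces $A$ to be constant, i.e. $|A| \in \{0, p^d\}$, contradicting $|A| = p^{d-1}$. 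Therefore some hyperplane makes $E$ a graph, and the tiling is graphical in all cases (for $d = 3$ the nontrivial sizes $p$ and $p^2 = p^{d-1}$ are both covered).

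The step I expect to be the main obstacle is not the cyclotomic lemma, which is short and classical, but correctly setting up and exploiting the projection: one must project onto the \emph{one-dimensional} quotient $\mathbb{F}_p^d/H$, since it is precisely this reduction to $\mathbb{Z}_p$ that activates the prime-field rigidity, and the analogous statement fails over quotients of dimension $\ge 2$. This is also where the conclusion genuinely weakens relative to the planar Theorem \ref{1tilecharacterization}: when $d = 2$ a hyperplane is one-dimensional, so "$A$ equidistributes on the cosets of $H$" already reads "$A$ is a graph", whereas for $d \ge 3$ it does not. That gap is exactly what forces the global argument showing $A$ would otherwise be constant, and it is why the clean conclusion is only that the tiling is graphical rather than that $E$ itself must be the graph.
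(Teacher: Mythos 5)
Your proposal is correct and takes essentially the same route as the paper: the arithmetic core (irreducibility of $1+x+\cdots+x^{p-1}$ over $\mathbb{Q}$ forcing equidistribution on hyperplane cosets, which is the paper's Proposition~\ref{proposition: equivalentFouriervanish}) and the closing contradiction (otherwise $\widehat{A}$ vanishes at every $m \neq \vec{0}$, so Fourier inversion makes $A$ constant) are exactly the paper's ingredients. Your per-hyperplane projection onto $\mathbb{F}_p^d/H \cong \mathbb{Z}_p$ and the sweep over all hyperplanes is just a contrapositive reorganization of the paper's argument, which instead first deduces that $\widehat{E}(m)=0$ for some $m \neq \vec{0}$ (since $\widehat{A}$ cannot vanish identically off $\vec{0}$) and then applies the rigidity once to $E$.
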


In higher dimensions, the results of Theorems~\ref{1tilecharacterization} and~\ref{1tilecharacterization3space} seemingly fail i.e., not every tiling pair 
is graphical at least over cyclic groups of nonprime order. For example in \cite{KM06}, a tiling of $\mathbb{Z}_6^5 \times \mathbb{Z}_{15}$ which is not spectral is constructed. Such a tiling cannot be graphical as all graphs are 
spectral sets. However it is not clear exactly in which dimension the graphical structure of tilings breaks down if we consider only vector spaces over prime fields $\mathbb{Z}_p$. The first possibility not covered by our methods of a non-graphical tiling would be $(E,A) \subseteq \mathbb{F}_p^4$ 
with $|E|=|A|=p^2$ but we have not found any such explicit example.

\subsection{Packings} When tiling or multi-tiling is not possible, we can still ask for the largest proportion of ${\Bbb F}_q^d$ that various disjoint translates of $E \subset {\Bbb F}_q^d$ can cover. 

\begin{definition} \label{packingdef} We say that $E$ packs ${\Bbb F}_q^d$ by $A$ if $A \subset {\Bbb F}_q^d$ with the property that 
$$(E+a) \cap (E+a')=\emptyset \ \text{for all} \ a \not=a', \ a,a' \in A.$$ 
\end{definition} 

\begin{definition} \label{optpackingdef} We say that $A$ is an optimal packing set for $E$ if $A$ is a set of largest possible size such that $E$ packs ${\Bbb F}_q^d$ by $A$. \end{definition} 

\begin{definition} \label{densitypackingdef} The density of packing of $E \subset {\Bbb F}_q^d$ by $A \subset {\Bbb F}_q^d$ is the ratio 
$\frac{|E||A|}{q^d}$. \end{definition} 

We start out with a very negative result. 
\begin{theorem} \label{4dsphere} Let $E=S_t=\{x \in {\Bbb F}_q^d: ||x||=t \}$, where $||x||=x_1^2+\dots+x_d^2$. \begin{itemize} 
\item If $d \ge 4$ and $t \not=0$, then the optimal packing of $S_t$ has size $1$. 

\end{itemize} 

\end{theorem}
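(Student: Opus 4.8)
The plan is to reformulate the packing bound as a statement about the difference set of the sphere. Since a single translate always packs, the optimal packing of $S_t$ has size $1$ precisely when no two distinct translates $S_t+a$ and $S_t+a'$ are disjoint; equivalently, when every $h\in{\Bbb F}_q^d$ lies in the difference set $S_t-S_t$. (We work with $q$ odd, as otherwise $||x||=(x_1+\dots+x_d)^2$ and $S_t$ is an affine hyperplane, for which the statement is false; the finite field sphere problem is traditionally posed for odd $q$.) The element $\vec 0$ is trivially a difference once we observe $S_t\neq\emptyset$, which holds because a nondegenerate quadratic form in $d\geq 3$ variables represents every value of ${\Bbb F}_q$. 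So it remains to show that every nonzero $h$ is a difference of two points of $S_t$.

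Next I would convert $h\in S_t-S_t$ into an intersection problem. Writing $y=x-h$, the conditions $x,y\in S_t$ become $||x||=t$ and $||x-h||=||x||-2\,x\cdot h+||h||=t$, i.e. $x\cdot h=\tfrac12||h||$. Thus $h\in S_t-S_t$ if and only if the affine hyperplane $\{x:\,x\cdot h=\tfrac12||h||\}$ meets $S_t$, and the number of such witnesses is exactly $N(h)=|\{x:\,||x||=t,\ x\cdot h=\tfrac12||h||\}|$. I would prove $N(h)>0$ by restricting the quadratic form to this hyperplane and splitting into two cases according to whether $h$ is anisotropic or isotropic.

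When $||h||\neq 0$, the line $\langle h\rangle$ is not contained in $h^\perp$, so ${\Bbb F}_q^d=\langle h\rangle\oplus h^\perp$ and the form restricts nondegenerately to $h^\perp\cong{\Bbb F}_q^{d-1}$. The hyperplane condition forces the $\langle h\rangle$-component of $x$ to equal $\tfrac12 h$, and $||x||=t$ then reduces to representing $t-\tfrac14||h||$ by the nondegenerate $(d-1)$-variable form on $h^\perp$; since $d-1\geq 3$, every value is represented, so $N(h)>0$. The delicate case is $||h||=0$: here $\tfrac12||h||=0$, the hyperplane is the linear space $h^\perp$, and $h\in h^\perp$, so the restricted form is degenerate with radical exactly $\langle h\rangle=h^\perp\cap(h^\perp)^\perp$. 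Splitting off the radical, $h^\perp=\langle h\rangle\oplus W$ with $Q|_W$ nondegenerate of rank $d-2$ and $||sh+w||=||w||$; hence $N(h)=q\cdot|\{w\in W:\,||w||=t\}|$. Because $d-2\geq 2$ and $t\neq 0$, the nondegenerate form on $W\cong{\Bbb F}_q^{d-2}$ represents the nonzero value $t$ (a nondegenerate form in $\geq 2$ variables is universal on ${\Bbb F}_q^{*}$), so again $N(h)>0$.

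The main obstacle is exactly this isotropic case, and it pinpoints the role of the hypothesis $d\geq 4$: restriction to the tangent-type hyperplane $h^\perp$ loses two dimensions of nondegeneracy — one from passing to a hyperplane, one from the radical $\langle h\rangle$ — leaving a rank $d-2$ form, and representing the nonzero value $t$ requires at least two variables, i.e. $d\geq 4$. For $d=3$ the form collapses to a single square and represents only half of ${\Bbb F}_q^{*}$, which is precisely why the conclusion can fail in dimension three. To finish, I would record the two standard facts used — that a nondegenerate quadratic form over an odd finite field represents all nonzero values once it has $\geq 2$ variables and all values once it has $\geq 3$ — and then assemble the two cases to conclude $S_t-S_t={\Bbb F}_q^d$, giving the claimed optimal packing size $1$.
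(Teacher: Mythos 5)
Your proof is correct; the complication in reviewing it is that the paper contains no proof of Theorem \ref{4dsphere} to compare against --- the theorem is stated in the introduction and then never addressed (Sections 2--5 and the appendix prove the other results), the claim evidently being deferred to the finite-field distance-set literature such as \cite{HIKSU12}, where statements of this kind are proved by Fourier analysis: one expands the indicators of the two spheres in additive characters and uses Gauss/Kloosterman sum estimates to show $|S_t \cap (S_t+h)|$ is positive (of order $q^{d-2}$) once $d \ge 4$ and $t \ne 0$. Your route is genuinely different and more elementary: after the same reduction of $h \in S_t - S_t$ to the hyperplane section $\{x : ||x||=t, \ x \cdot h = \tfrac{1}{2}||h||\}$, you argue by exact Witt-type splitting rather than estimates --- $\mathbb{F}_q^d = \langle h \rangle \oplus h^{\perp}$ with the restricted form nondegenerate of rank $d-1 \ge 3$ (hence universal) when $||h|| \ne 0$, and $h^{\perp} = \langle h \rangle \oplus W$ with $Q|_W$ nondegenerate of rank $d-2 \ge 2$ (hence representing every nonzero value, in particular $t$) when $||h||=0$. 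This needs only the standard representation theorems for quadratic forms over finite fields, which you state, it gives exact counts rather than asymptotics, and it pinpoints exactly where $d \ge 4$ and $t \ne 0$ are used and why $d=3$ can fail. Two details you handle that the printed statement glosses over and which genuinely matter: odd characteristic must be assumed (for even $q$ the sphere $S_t$ is an affine hyperplane, which has $q$ pairwise disjoint translates, so the theorem as stated is false there), and nonemptiness of $S_t$ must be verified to dispose of the trivial difference $h=\vec{0}$.
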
 

We now specialize to the two-dimensional case. 

\begin{definition} A circle of ``radius'' $R$ and center $(u,v) \in \mathbb{F}_q^2$ is the affine variety given by
$$ C_R((u,v))=\left\{ (x,y) \in \mathbb{F}_q^2: (x-u)^2 + (y-v)^2 = R \right\}$$
$$=\{ (x,y) \in {\Bbb F}_q^2: ||(x,y)-(u,v)||=R \}. $$

Note that this notion of radius is the square of the usual one but is more suitable for work in general fields. The packing number $P(q,c)$ is defined as the maximum number of pairwise disjoint circles of radius $c$ that you can fit into the plane $\mathbb{F}_q^2$. Scaling by $M \neq 0$ shows that $P(q,M^2c)=P(q,c)$. We will often abbreviate $P(q,1)=P(q)$ when referring to the packing number of unit circles.
\end{definition}

In \cite{BIP14}, the following lemma about completing line segments into triangles in the plane was proven. It will be very useful in our study of circle packings:

\begin{lemma}\label{lem: triangle}
Fix a field $\mathbb{F}$ of characteristic not equal to two and let $P$ be the plane $\mathbb{F}^2$. 
Let $(x_1,x_2)$ be a line segment of length $||x_1-x_2||=(x_1-x_2) \cdot (x_1-x_2) =\ell_1 \neq 0$ in the plane $P$. This segment 
can be extended into exactly $\mu$ triangles $(x_1,x_2,x_3)$ with 
$||x_2-x_3||=\ell_2$ and $||x_3-x_1||=\ell_3$ given where 
$$
\mu=\begin{cases}
2 \text{ if } 4\sigma_2-\sigma_1^2 \text{ is a nonzero square in } \mathbb{F} \\
1 \text{ if } 4\sigma_2-\sigma_1^2\text{ is zero} \\
0 \text{ if } 4\sigma_2-\sigma_1^2 \text{ is a nonsquare in } \mathbb{F}
\end{cases}
$$
and $\sigma_1=\ell_1+\ell_2+\ell_3$, $\sigma_2=\ell_1\ell_2+\ell_2\ell_3+\ell_3\ell_1$.
\end{lemma}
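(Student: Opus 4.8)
The plan is to place the segment in coordinates, reduce the triangle-completion problem to intersecting a line with a circle, and read off the count from the discriminant of a single-variable quadratic. First I would translate so that $x_1 = \vec{0}$ and write $x_2 = (a,b)$, so that $a^2 + b^2 = \ell_1 \neq 0$; in particular at least one of $a,b$ is nonzero, say $a \neq 0$ (the case $a=0$, $b\neq 0$ being symmetric). Writing the sought third vertex as $x_3 = (x,y)$, the two distance conditions become
$$ x^2 + y^2 = \ell_3, \qquad (x-a)^2 + (y-b)^2 = \ell_2. $$
Each solution $(x,y)$ yields exactly one triangle, so counting triangles is the same as counting solutions of this system.

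Next I would subtract the second equation from the first (using characteristic $\neq 2$) to eliminate the quadratic terms, obtaining the linear radical-axis equation $2(ax+by) = \ell_1 + \ell_3 - \ell_2$, i.e.\ $ax+by=c$ with $c = \tfrac{1}{2}(\ell_1 + \ell_3 - \ell_2)$. Since $a \neq 0$ I can solve $x = (c-by)/a$ and substitute into $x^2+y^2=\ell_3$, which clears to the genuine quadratic
$$ \ell_1 y^2 - 2bc\,y + (c^2 - a^2\ell_3) = 0, $$
whose leading coefficient $\ell_1$ is nonzero. Over a field of characteristic $\neq 2$ the number of roots of a quadratic with nonzero leading coefficient is $2$, $1$, or $0$ according as its discriminant is a nonzero square, zero, or a nonsquare, and each root $y$ determines $x$ uniquely through the linear relation.

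The crux is then the discriminant computation. A direct expansion, using $a^2+b^2=\ell_1$ to cancel the $c^2$-terms, gives
$$ \Delta = (2bc)^2 - 4\ell_1(c^2 - a^2\ell_3) = 4a^2(\ell_1\ell_3 - c^2). $$
Substituting $c = \tfrac{1}{2}(\ell_1 + \ell_3 - \ell_2)$ and expanding, one finds $4(\ell_1\ell_3 - c^2) = 4\ell_1\ell_3 - (\ell_1 + \ell_3 - \ell_2)^2 = -(\ell_1^2 + \ell_2^2 + \ell_3^2) + 2(\ell_1\ell_2 + \ell_2\ell_3 + \ell_3\ell_1)$, which rewrites via the identity $\ell_1^2 + \ell_2^2 + \ell_3^2 = \sigma_1^2 - 2\sigma_2$ as exactly $4\sigma_2 - \sigma_1^2$. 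Hence $\Delta = a^2(4\sigma_2 - \sigma_1^2)$. Since $a \neq 0$, the factor $a^2$ is a nonzero square and does not affect the square-class of $\Delta$; therefore the count $\mu$ is governed entirely by whether $4\sigma_2 - \sigma_1^2$ is a nonzero square, zero, or a nonsquare, giving precisely the stated trichotomy. The main obstacle is purely computational: carrying out the discriminant simplification cleanly and recognizing the symmetric-function identity $\ell_1^2 + \ell_2^2 + \ell_3^2 = \sigma_1^2 - 2\sigma_2$ that converts the raw expression into the symmetric form $4\sigma_2 - \sigma_1^2$.
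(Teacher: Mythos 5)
Your proof is correct, and every computation checks out: the radical-axis reduction to $ax+by=c$ with $c=\tfrac12(\ell_1+\ell_3-\ell_2)$, the quadratic $\ell_1 y^2 - 2bc\,y + (c^2 - a^2\ell_3)=0$ with nonzero leading coefficient, and the discriminant identity $\Delta = 4a^2(\ell_1\ell_3 - c^2) = a^2(4\sigma_2-\sigma_1^2)$ are all accurate, and multiplying by the nonzero square $a^2$ indeed preserves the square class, which yields the stated trichotomy. One point of comparison: the paper itself does not prove this lemma at all --- it is imported without proof from the reference \cite{BIP14}, so there is no internal argument to measure yours against; your write-up is the natural self-contained verification (essentially the argument in that reference), and it is worth noting that the quantity $4\sigma_2-\sigma_1^2$ you recover is exactly the Heron-formula expression $16\,(\text{area})^2$ written in the squared side lengths, which explains conceptually why the count is governed by whether it is a square. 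The only stylistic caveat is that ``triangle'' here means any solution point $x_3$ (possibly collinear with, or even at zero distance from, $x_1,x_2$ when the field has isotropic vectors), so your bijection between solutions $(x,y)$ and triangles is the right reading of the statement and needs no further pruning of degenerate cases.
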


This lemma immediately leads to a criterion for when two circles intersect in the plane:

\begin{corollary}
Let $C_1$ be a circle of radius $\ell_1$ and $C_2$ be a circle of radius $\ell_2$ and let $\ell_3 = ||x_1-x_2|| \neq 0$ be the distance between their centers 
$x_1$ and $x_2$. Then $C_1$ and $C_2$ intersect in $\mu$ points where 
$$
\mu=\begin{cases}
2 \text{ if } 4\sigma_2-\sigma_1^2 \text{ is a nonzero square in } \mathbb{F} \\
1 \text{ if } 4\sigma_2-\sigma_1^2\text{ is zero} \\
0 \text{ if } 4\sigma_2-\sigma_1^2 \text{ is a nonsquare in } \mathbb{F}
\end{cases}
$$
and $\sigma_1=\ell_1+\ell_2+\ell_3$, $\sigma_2=\ell_1\ell_2+\ell_2\ell_3+\ell_3\ell_1$. When $\ell_1=\ell_2=c$ and $\ell_3=R \neq 0$ we find two 
circles of radius $c$ and distance $R$ between their centers intersect in $\mu$ points where 
$$
\mu=\begin{cases}
2 \text{ if } R(4c-R) \text{ is a nonzero square in } \mathbb{F} \\
1 \text{ if } R(4c-R) \text{ is zero i.e., } $R=4c$ \\
0 \text{ if } R(4c-R) \text{ is a nonsquare in } \mathbb{F}
\end{cases}
$$
\end{corollary}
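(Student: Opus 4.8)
The plan is to recognize that finding the intersection points of the two circles is exactly the triangle-completion problem already solved in Lemma~\ref{lem: triangle}, so that the corollary follows from a change of notation together with one short symmetric-function computation. First I would fix the two centers $x_1$ and $x_2$ and observe that a point $x_3$ lies on both circles precisely when $||x_3-x_1||=\ell_1$ and $||x_3-x_2||=\ell_2$. Since $\ell_3=||x_1-x_2||\neq 0$ is the length of the (nondegenerate) segment joining the two centers, each such intersection point $x_3$ is exactly a third vertex completing the segment $(x_1,x_2)$ into a triangle with the prescribed side lengths, and conversely. Thus the number of intersection points equals the number $\mu$ of completing triangles, and the map ``intersection point $\mapsto$ third vertex'' is a bijection, so no double counting occurs.

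The one point that needs care is matching the roles of the three lengths to those in the lemma. In the lemma the distinguished segment has length $\ell_1$, whereas here the distinguished segment (between the centers) has length $\ell_3$, and the two remaining lengths are permuted accordingly. However, the quantities $\sigma_1=\ell_1+\ell_2+\ell_3$ and $\sigma_2=\ell_1\ell_2+\ell_2\ell_3+\ell_3\ell_1$ that govern $\mu$ are the elementary symmetric functions of the three lengths, hence invariant under any relabeling. Consequently the trichotomy of Lemma~\ref{lem: triangle} applies verbatim with the same discriminant $4\sigma_2-\sigma_1^2$, which proves the first displayed formula. I would also note that the degenerate case $4\sigma_2-\sigma_1^2=0$ corresponds to the two circles being tangent, so that the single completing triangle is the collinear one; this is already folded into the lemma's count and requires no separate treatment.

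Finally, for the special case $\ell_1=\ell_2=c$ and $\ell_3=R\neq 0$, I would simply substitute into the discriminant. With $\sigma_1=2c+R$ and $\sigma_2=c^2+2cR$ one computes
$$
4\sigma_2-\sigma_1^2=4(c^2+2cR)-(2c+R)^2=4cR-R^2=R(4c-R),
$$
and reading off the square/zero/nonsquare cases yields the second displayed formula, the zero case reducing to $R=4c$ since $R\neq 0$. The main (and only genuine) obstacle here is conceptual rather than computational: recognizing that the intersection count coincides exactly with the triangle-completion count of the previous lemma, after which everything reduces to the above symmetric-function identity.
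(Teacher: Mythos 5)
Your proposal is correct and takes essentially the same route as the paper: the paper also deduces the first part directly from Lemma~\ref{lem: triangle} (dismissing it as ``clear'') and then performs the identical substitution $\sigma_1=2c+R$, $\sigma_2=c^2+2cR$ to get $4\sigma_2-\sigma_1^2=R(4c-R)$. Your explicit remarks on the bijection between intersection points and completing triangles, and on the invariance of $4\sigma_2-\sigma_1^2$ under relabeling of the three lengths, are exactly the details the paper leaves implicit.
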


The proof is very simple, so we give it here. The first part is clear and the second follows from the following computation. Setting $\ell_1=\ell_2=c$ and $\ell_3=R \neq 0$ yields $\sigma_1 = 2c + R$ and $\sigma_2 = 2cR+c^2$. Thus 
$$ 4\sigma_2-\sigma_1^2 = 4(2cR+c^2)-(2c+R)^2=4cR-R^2=R(4c-R).$$

\begin{example}[packing of circles in $\mathbb{F}_3^2$]
The function $f: \mathbb{F}_3 \to \mathbb{F}_3$ given by $f(R)=R(4-R)=R(1-R)$ takes only the values $\{0,1\}$ and so is always a square.
Thus any two unit circles in the plane $P=\mathbb{F}_3^2$ intersect. Thus the unit packing number $P(3)=P(3,1)$ is equal to $1$ as you can pack at most 
$1$ unit circle into the plane $\mathbb{F}_3^2$. The function $g: \mathbb{F}_3 \to \mathbb{F}_3$ given by $g(R)=R(4(2)-R)=R(-1-R)$ takes only the values 
$\{0,1\}$ also so $P(3,2)=1$ also.
\end{example}

Using the same results, we get a criterion for the existence of three disjoint circles of radius $c$ in the plane:

\begin{proposition} There is a packing of three disjoint circles of radius $c$ in $\mathbb{F}_q^2$ whose centers make a triangle with nonzero sidelengths $\ell_1, \ell_2, \ell_3$  if and only if 
$$ \ell_1(4c-\ell_1), \ell_2(4c-\ell_2), \ell_3(4c-\ell_3) $$
are non squares in $\mathbb{F}_q$ and $4\sigma_2 - \sigma_1^2$ is a square where $\sigma_2=\ell_1\ell_2+\ell_2\ell_3+\ell_3\ell_1, \sigma_1=\ell_1+\ell_2+\ell_3$.
\end{proposition}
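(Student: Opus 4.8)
The plan is to obtain this proposition as a direct consequence of two facts already established above: the triangle-counting formula of Lemma~\ref{lem: triangle} and the pairwise intersection criterion of the preceding Corollary. The key structural observation is that the assertion decouples into two logically independent requirements. First, three points of $\mathbb{F}_q^2$ with the prescribed pairwise distances $\ell_1,\ell_2,\ell_3$ must actually exist, so that the centers genuinely form a triangle. Second, the three radius-$c$ circles centered at those points must be pairwise disjoint. I would prove each requirement is governed by exactly one of the two stated algebraic conditions and then combine them.

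First I would settle the existence of the center configuration. Fixing any segment of length $\ell_1$ in the plane, Lemma~\ref{lem: triangle} asserts that it extends to a triangle with the remaining two sidelengths $\ell_2,\ell_3$ in exactly $\mu$ ways, and that $\mu\geq 1$ precisely when $4\sigma_2-\sigma_1^2$ fails to be a nonsquare, i.e.\ when it is a square (with the value zero producing the single degenerate placement). Here $\sigma_1,\sigma_2$ are the elementary symmetric functions of $\ell_1,\ell_2,\ell_3$, matching the quantities in the proposition exactly. Hence the centers can be realized as required if and only if $4\sigma_2-\sigma_1^2$ is a square in $\mathbb{F}_q$.

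Next I would address pairwise disjointness. For each of the three pairs of circles, the two centers are separated by one of the sidelengths $\ell_k$, which is nonzero by hypothesis, so the centers are distinct and the Corollary applies with common radius $c$ and center-distance $\ell_k$. That pair of circles then meets in $\mu_k$ points, and $\mu_k=0$ exactly when $\ell_k(4c-\ell_k)$ is a nonsquare. Consequently all three pairs are simultaneously disjoint if and only if each of $\ell_1(4c-\ell_1),\ \ell_2(4c-\ell_2),\ \ell_3(4c-\ell_3)$ is a nonsquare in $\mathbb{F}_q$. Combining this equivalence with the one from the previous paragraph yields the proposition in both directions.

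I do not expect a genuine obstacle here, since the two cited results supply each half of the equivalence almost verbatim; the real work was already done in proving the Lemma and Corollary. The only points requiring care are bookkeeping ones: I must confirm that \emph{square} is to be read so as to include the value zero (the $\mu=1$ degenerate triangle), so that the existence clause and the disjointness clause use mutually consistent conventions, and I should note explicitly that the nonzero-sidelength hypothesis is what rules out coincident centers and legitimizes each application of the Corollary. With these conventions pinned down, the proof is a short concatenation of the two equivalences.
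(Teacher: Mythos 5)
Your proposal is correct and follows essentially the same route as the paper: the paper likewise reduces the statement to the conjunction of (i) existence of the triangle of centers, settled by Lemma~\ref{lem: triangle} via the condition on $4\sigma_2-\sigma_1^2$, and (ii) pairwise disjointness of the three radius-$c$ circles, settled by the Corollary via the nonsquare conditions on $\ell_k(4c-\ell_k)$. Your added remarks on conventions (reading ``square'' to include zero, and the nonzero-sidelength hypothesis legitimizing the Corollary) are sensible bookkeeping that the paper's one-sentence proof leaves implicit.
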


Once again, the proof is straightforward. It follows from the previously mentioned results since such a packing exists in ${\Bbb F}_q^2$ if and only if the triangle made by the centers exists in ${\Bbb F}_q^2$ and the circles of radius $c$ centered at the vertices of the triangle are pairwise disjoint. 

This generalizes immediately using the same argument, to a brute force algorithm to determine whether one can pack $k$ disjoint circles of radius $c$ in a finite plane:

\begin{theorem}[Primitive Circle Packing Algorithm]
Fix a finite field $\mathbb{F}_q$ of odd characteristic and nonzero $c \in \mathbb{F}_q$. Let $f: \mathbb{F}_q \to \mathbb{F}_q$ be given 
by $f(R)=R(4c-R)$ and let $S=\{ R \in \mathbb{F}_q | f(R) \text{ is not a square in } \mathbb{F}_q\}$. Note $0 \notin S$. Then it is possible to pack $k$ distinct circles of radius $c$ in the plane $P=\mathbb{F}_q^2$ (with nonzero distance between centers) if and only if it is possible to find $k$ distinct vertices in $P$ that form a $(k-1)$-simplex whose edge lengths all lie in $S$. In the case this holds, the packing is achieved by placing the $k$-circles at the vertices of the $(k-1)$-simplex.
\end{theorem}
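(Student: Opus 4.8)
The plan is to recognize that this theorem is a bookkeeping promotion of the preceding corollary from a pairwise statement to a statement about $k$-fold packings. The one conceptual ingredient is that being a packing is a purely \emph{pairwise} condition: a family of circles is pairwise disjoint exactly when each of the $\binom{k}{2}$ pairs among them is disjoint. Once this is in place, the corollary converts each pairwise disjointness into a clean condition on the distance between centers, and the theorem falls out by applying it to every pair at once.

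First I would set up the dictionary between the two sides. A packing of $k$ distinct circles of radius $c$ with nonzero distances between centers is the same data as a choice of $k$ centers $x_1,\dots,x_k \in \mathbb{F}_q^2$ for which all pairwise distances $R_{ij}=||x_i-x_j||$ are nonzero and the circles $C_c(x_i)$ are pairwise disjoint. By the corollary, for nonzero $R_{ij}$ the two circles $C_c(x_i)$ and $C_c(x_j)$ fail to meet (they share $\mu=0$ points) precisely when $f(R_{ij})=R_{ij}(4c-R_{ij})$ is a nonsquare, which is exactly the condition $R_{ij}\in S$. Since $0\notin S$, membership $R_{ij}\in S$ already forces $R_{ij}\neq 0$, so the two clauses ``nonzero distance'' and ``pairwise disjoint'' collapse into the single clause $R_{ij}\in S$ for all $i\neq j$.

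Next I would translate the geometric side and conclude. To say that $k$ distinct points form a $(k-1)$-simplex whose edge lengths all lie in $S$ means, in this setting, that all $\binom{k}{2}$ pairwise distances $R_{ij}$ lie in $S$ (the $1$-skeleton of the $(k-1)$-simplex being the complete graph on the $k$ vertices); again, because $0\notin S$, this automatically makes the points pairwise distinct with nonzero pairwise distances. Thus both sides of the asserted equivalence reduce to the identical combinatorial condition $R_{ij}\in S$ for every $i\neq j$, so they are equivalent. The closing clause of the statement, that the packing is realized by centering the circles at the vertices, is then immediate from the same dictionary running in the reverse direction.

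The main obstacle is not logical but definitional. I would make explicit that ``edge lengths in $S$'' refers to \emph{all} $\binom{k}{2}$ edges rather than merely those of some spanning subgraph, and I would remark that a genuine affine $(k-1)$-simplex cannot be embedded in the two-dimensional plane once $k>3$, so ``$(k-1)$-simplex'' is being used combinatorially to mean $k$ vertices together with their complete set of pairwise edge-length data. With that clarification pinned down, the proof is a single application of the corollary to each pair of centers, and nothing further is required.
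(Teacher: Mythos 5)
Your proof is correct and is essentially the paper's own argument: the paper treats this theorem as an immediate generalization of its three-circle proposition, reducing pairwise disjointness (for nonzero center distances) to the nonsquare condition $R_{ij}\in S$ via the two-circle intersection corollary, exactly as you do. Your explicit clarifications—that $0\notin S$ collapses the nonzero-distance and disjointness clauses, and that ``$(k-1)$-simplex'' is meant combinatorially as $k$ distinct points with all $\binom{k}{2}$ pairwise distances—are sound and in fact spell out what the paper leaves implicit.
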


Note when $q=1 \mod 4$, it is possible to have zero distance between centers of distinct circles and this must be excluded in the theorem. The theorem is a complete characterization of packing only when $q=3 \mod 4$.

When searching for such a $(k-1)$ simplex as mentioned in the algorithm, one can assume the first vertex is the origin and hence consider only the 
$O((q^2)^{k-1})$ possible $(k-1)$ simplices pinned at the origin. For each of these, a calculation of $\binom{k}{2}$ distances will determine the edge lengths 
of this simplex and determine if it will work or not. Thus a brute force search will essentially use at most $O(k^2 q^{2k-2})$ basic steps to determine if 
a $k$-packing of circles of radius $c$ exists in the plane $P=\mathbb{F}_q^2$ or not, and if it does exist, will produce such a packing.
This is a polynomial time algorithm for any fixed $k$. However as the only a priori upper bound on $P(q,c)$ is $\frac{q^2}{q-1}=O(q)$ as the size of a circle is at least 
$q-1$, using this algorithm to determine $P(q,c)$ is not feasible as it would have to check $1 \leq k \leq q$ cases leading to $O(q^2q^{2q-2})=O(q^{2q})$ efficiency 
which is not feasible.

When the distance between the distinct circles in the packing is allowed to be zero, in fact larger packings can be achieved. This can be done only when 
$q = 1 \mod 4$. In this case there is a primitive 4th root of unity $i$ in $\mathbb{F}_q$ and the two lines $\{ (t,it) | t \in \mathbb{F}_q \}$ and 
$\{ (t,-it) | t \in \mathbb{F}_q \}$ are isotropic ($||v||=0$ for $v$ on these lines) and together form the circle of radius zero about the origin.
It turns out for any nonzero $c$, the $q$ circles of radius $c$ centered on the points of an isotropic line are all disjoint and hence form a 
``degenerate'' isotropic packing of nearly optimal size.

\begin{theorem}[Isotropic circle packing] \label{icp} 
Let $q = 1 \mod 4$ and let $P=\mathbb{F}_q^2$ be the affine plane over $\mathbb{F}_q$. 
If $i$ is a primitive $4$th root of unity in $\mathbb{F}_q$ then the lines $\{(t,it) | t \in \mathbb{F}_q \}$ and $\{ (t,-it) | t \in \mathbb{F}_q \}$ are the two isotropic lines in this plane. For any $c \neq 0$, we have 
$$ q \leq P(q,c) \leq q+1. $$

A packing of $q$ circles of radius $c$ can be achieved by taking the $q$ circles of radius $c$ centered at the $q$ points on an isotropic line. The complement of  this packing is nothing other than the isotropic line itself.
\end{theorem}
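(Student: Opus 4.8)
The plan is to split the statement into the lower bound (supplied by the explicit construction, which also identifies the complement) and the upper bound (a counting estimate), and to treat them separately. Fix a primitive fourth root of unity $i$, so $i^2=-1$, and set $L=\{(t,it):t\in\mathbb{F}_q\}$. The key preliminary I would record is the factorization $||(x,y)||=x^2+y^2=(x+iy)(x-iy)$, valid because $-1=i^2$ is a square. Its zero set is the union of the lines $x+iy=0$ and $x-iy=0$, i.e.\ $\{(t,it)\}$ and $\{(t,-it)\}$, which confirms these are exactly the two isotropic lines. The same factorization counts the points of a circle: by translation it suffices to treat $x^2+y^2=c$, and the change of variables $s=x+iy$, $r=x-iy$ (invertible since the characteristic is odd and $i\in\mathbb{F}_q$) converts this to $sr=c$, which for $c\neq 0$ has exactly $q-1$ solutions. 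Hence every circle of nonzero radius has exactly $q-1$ points.

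For the lower bound I would prove that the $q$ circles centered at the points of $L$ are pairwise disjoint, and this is the delicate step. It is also the reason the earlier intersection Corollary cannot simply be quoted: any two distinct centers $(t_1,it_1),(t_2,it_2)\in L$ satisfy $||(t_1-t_2,\,i(t_1-t_2))||=(t_1-t_2)^2+i^2(t_1-t_2)^2=0$, so their separation is $0$, which is precisely the degenerate case $\ell_3=0$ excluded from the Corollary. I would argue directly instead: subtracting the two circle equations $(x-t_j)^2+(y-it_j)^2=c$ for $j=1,2$, the quadratic terms cancel and the difference simplifies to $2(t_2-t_1)(x+iy)$; since $t_2\neq t_1$ and the characteristic is odd, this forces $x+iy=0$. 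Thus any common point must lie on the line $x+iy=0$, which is $L$ itself. But every point of $L$ lies at distance $0$ from every center on $L$, whereas the circle consists of points at distance $c\neq 0$, so no point of $L$ meets any of the circles. This contradiction shows the intersections are empty, and therefore $P(q,c)\ge q$.

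The description of the complement then follows by a clean count. The $q$ disjoint circles cover $q(q-1)=q^2-q$ points, and the line $L$, which has $q$ points, is disjoint from all of them by the computation just made. Since $q(q-1)+q=q^2=|\mathbb{F}_q^2|$, the circles together with $L$ partition the plane, so the complement of the packing is precisely the isotropic line $L$.

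For the upper bound, observe that an arbitrary packing is a family of $N$ pairwise disjoint circles, each of size $q-1$, so $N(q-1)\le q^2$. Writing $q^2=(q-1)(q+1)+1$ gives $N\le (q+1)+\frac{1}{q-1}$; since $N$ is an integer and $q\ge 5$ forces $\frac{1}{q-1}<1$, we conclude $N\le q+1$, i.e.\ $P(q,c)\le q+1$. The only genuinely subtle point in the whole argument is the disjointness established above: the general intersection criterion degenerates at zero center-distance, so one must instead exploit the factorization of the norm form to locate any potential intersection points on the isotropic line and then rule them out. Everything else reduces to the point count of $q-1$ per circle and elementary arithmetic.
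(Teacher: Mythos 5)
Your proof is correct and follows essentially the same route as the paper: subtracting the two circle equations forces any intersection point onto the isotropic line, where the contradiction $c=0$ appears, and the upper bound and identification of the complement both follow from the count of $q-1$ points per circle. The only differences are cosmetic: you work in coordinates via the factorization $x^2+y^2=(x+iy)(x-iy)$ and supply a proof that circles of nonzero radius have exactly $q-1$ points (a fact the paper merely asserts), whereas the paper phrases the same subtraction argument through the self-perpendicularity of the isotropic line.
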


Note the isotropic packing of the last theorem is maximal as one can show that the complementary isotropic line cannot contain any circle of nonzero radius. However this does not necessary preclude the existence of a completely different maximum circle packing achieving $P(q,c)=q+1$.

\vskip.125in

\subsection{Algebraic Tilings}

In this section we study algebraic $1$-tilings of affine space over algebraically closed fields. We also relate some of these results to the well-known Jacobian conjectures (see e.g. \cite{FMV14}).


\begin{definition} Let $k$ be a field and $\bb A^d$ denote $d$-dimensional affine space over $k$. By a regular automorphism of $\bb A^d$ we mean a 
bijective function $F: \bb A^d \to \bb A^d$ such that both $F$ and $F^{-1}$ have polynomial coordinate functions. 
\end{definition}

The study of regular automorphisms of affine space has a long history, and a useful criterion in determining whether a polynomial function 
$F: \bb A^d \to \bb A^d$ has a polynomial inverse in given conjecturally (over algebraically closed fields) via the famous Jacobian conjecture. 

To state this conjecture, notice if $F^{-1}$ exists and is polynomial then both Jacobian matrices $DF$ and $DF^{-1}$ have polynomial entries, and taking determinants we see that $JF \cdot JF^{-1} = 1$ where $JF$ and $JF^{-1}$ are polynomials. Hence $JF$ has to be a nonzero constant as the only units in a polynomial ring are nonzero constants. The Jacobian conjecture is that this necessary condition is also sufficient:

\begin{conjecture}[Jacobian Conjecture]
Let $k$ be an algebraically closed field and $\bb A^d$ be $d$-dimensional affine space over $k$. A polynomial function $F: \bb A^d \to \bb A^d$ has a polynomial inverse if and only if the Jacobian determinant $JF$ is a nonzero constant (that is, $JF \in k^*$).
\end{conjecture}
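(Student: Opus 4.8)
The plan divides along the biconditional. The ``only if'' direction is already disposed of in the discussion preceding the statement: if $F$ admits a polynomial inverse $G=F^{-1}$, then differentiating $G\circ F=\mathrm{id}$ and applying the chain rule gives $DG(F)\cdot DF=I$, whence $JG(F)\cdot JF=1$ as polynomials, and since the only units of $k[x_1,\dots,x_d]$ are the nonzero constants we conclude $JF\in k^*$. All the content therefore lies in the ``if'' direction, and here I must first flag a genuine hazard: \emph{as stated over an arbitrary algebraically closed field the claim is false in positive characteristic}. Indeed, over $k=\overline{\mathbb F_p}$ in dimension one the additive polynomial $F(x)=x+x^p$ has $JF=F'(x)=1+p\,x^{p-1}=1\in k^*$, yet $F$ is $\mathbb F_p$-linear with nontrivial kernel $\{x: x(1+x^{p-1})=0\}$, hence not injective and so without any inverse at all. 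Thus any correct proof must use characteristic zero in an essential way, and I would restrict to $\mathrm{char}\,k=0$ (equivalently, to $k=\mathbb C$ by the Lefschetz principle) from the outset.

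In characteristic zero the natural first move is to produce the inverse formally. Since $JF\in k^*$ the Jacobian determinant never vanishes, so $DF(a)$ is invertible at every point; after translating we may assume $F(0)=0$ and $DF(0)=I$, and the formal inverse function theorem furnishes a unique $G\in k[[x_1,\dots,x_d]]^d$ with $G\circ F=F\circ G=\mathrm{id}$. The conjecture is then \emph{equivalent} to the assertion that this formal $G$ is in fact polynomial, i.e.\ has bounded degree. To render the problem finite I would next invoke the Bass--Connell--Wright and Yagzhev reductions: it suffices to treat maps of the special shape $F=X+H$ with $H$ homogeneous of degree $3$ (at the cost of letting the dimension $d$ grow), and in this homogeneous cubic setting the hypothesis $JF\in k^*$ forces the Jacobian matrix of $H$ to be nilpotent. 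One then tries to leverage this nilpotency --- through the rooted-tree / Abhyankar--Gurjar expansion of the coefficients of $G$, or through the \emph{a priori} degree bound $\deg G\le(\deg F)^{d-1}$ valid whenever $G$ is polynomial --- to force the formal series to terminate.

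The main obstacle is that this final step has no known resolution: passing from the nilpotency of the Jacobian of $H$ to a degree bound on the formal inverse is precisely the open core of the Jacobian Conjecture, which has resisted proof since Keller posed it in 1939. In all honesty, then, I do not expect to carry the plan to completion. What I can deliver rigorously is the easy ``only if'' direction, the characteristic-$p$ counterexample that corrects the hypotheses to $\mathrm{char}\,k=0$, and the reduction to the cubic-homogeneous, nilpotent-Jacobian normal form. Accordingly I would expect the statement to remain a \emph{conjecture} (as it is labelled here) and the surrounding development to establish instead the partial or finite-field analogues that the tiling framework of this paper actually makes accessible, rather than the full implication.
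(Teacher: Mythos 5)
You have assessed this correctly: the statement is labelled a conjecture because it has no proof, in this paper or anywhere else. The paper's entire treatment consists of the easy direction you give --- differentiate $G \circ F = \mathrm{id}$, take determinants, and use that the only units of $k[x_1,\dots,x_d]$ are the nonzero constants --- followed by the remark that the converse reduces to the case $k=\mathbb{C}$ and remains open in all dimensions $d \geq 2$. So your proposal matches the paper's actual content, and your identification of where every known attack stalls (passing from nilpotency of the Jacobian of the cubic-homogeneous part to termination of the formal inverse) is an accurate description of the state of the art, not a gap in your argument.

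Two of your points go beyond the paper and are worth keeping. First, your characteristic-$p$ counterexample $F(x)=x+x^p$, which has $JF=1$ yet fails to be injective over $\overline{\mathbb{F}}_p$, shows that the statement as literally written --- for an arbitrary algebraically closed field --- is false; the hypothesis $\mathrm{char}\,k=0$ must be imposed. The paper only gestures at this through its remark that the conjecture ``reduces to the case over $\mathbb{C}$ in characteristic zero,'' without ever restricting the field in the statement itself, so your flag is a genuine correction. Second, the Bass--Connell--Wright/Yagzhev reduction to maps $F=X+H$ with $H$ cubic homogeneous and nilpotent Jacobian is correct but entirely absent from the paper, which invokes the conjecture purely as motivation and context for its discussion of regular automorphisms and algebraic tilings (Theorem \ref{abyth} and the surrounding remarks), not as something it proves or uses in proofs.
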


The Jacobian conjecture reduces to the case over $\mathbb{C}$ in characteristic zero but is still open in all dimensions $d \geq 2$.

\begin{definition} Fix $k$ a field. Let $X$ and $Y$ be algebraic sets in $d$-dimensional affine space $\mathbb{A}^d_k$. Then $(X,Y)$ is an \emph{algebraic $1$-tiling} if 
the map $\theta: X \times Y \to \mathbb{A}^d_k$ given by $\theta(x,y)=x+y$ is an isomorphism of algebraic sets.
\end{definition}

\begin{theorem} \label{abyth} 
Let $k$ be an algebraically closed field, and suppose $(X, Y)$ algebraically 1-tiles $\bb A^d_{k}$. Then dim $X$ + dim $Y$ = $d$.  
\end{theorem}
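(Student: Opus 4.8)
The plan is to exploit the isomorphism $\theta: X \times Y \to \mathbb{A}^d_k$ directly at the level of dimension theory for algebraic sets, rather than trying to reconstruct a set-theoretic tiling argument. The single most important input is that $\theta$ is an \emph{isomorphism of algebraic sets}, so in particular $X \times Y$ and $\mathbb{A}^d_k$ have the same dimension. Since $\mathbb{A}^d_k$ is irreducible of dimension $d$, I first want to record that $\dim(X \times Y) = d$.

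First I would reduce to the irreducible case. Write $X = \bigcup_i X_i$ and $Y = \bigcup_j Y_j$ as finite unions of irreducible components. Then $X \times Y = \bigcup_{i,j} (X_i \times Y_j)$ is the decomposition into irreducible components (using that $k$ is algebraically closed, hence a product of irreducible varieties over $k$ is irreducible). Because $X \times Y \cong \mathbb{A}^d_k$ is itself irreducible, there can be only one component, so $X$ and $Y$ are each irreducible. This is the step I expect to require the most care: I must invoke that the product of two irreducible $k$-varieties is irreducible when $k$ is algebraically closed, and that an irreducible variety has exactly one top-dimensional component, to conclude $X$ and $Y$ are irreducible. (If one prefers to avoid irreducibility of products, one can instead argue only with the top-dimensional components, but the irreducibility of $\mathbb{A}^d_k$ still forces a single product component.)

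Next I would apply the dimension formula for products of irreducible algebraic sets over a field, namely
$$
\dim(X \times Y) = \dim X + \dim Y.
$$
This is a standard fact from commutative algebra / algebraic geometry (it follows, for instance, from the behavior of transcendence degree of function fields under tensor product, since $k(X \times Y)$ contains $k(X) \otimes_k k(Y)$ and $\operatorname{trdeg}_k$ is additive). Combining this with $\dim(X \times Y) = \dim(\mathbb{A}^d_k) = d$, which holds because $\theta$ is an isomorphism of algebraic sets, yields $\dim X + \dim Y = d$, as desired.

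The main obstacle, as flagged above, is justifying that $X$ and $Y$ may be taken irreducible (or more precisely that the relevant product of components is irreducible) so that the clean additive dimension formula applies; everything else is a direct appeal to standard dimension theory together with the hypothesis that $\theta$ is an isomorphism. I would therefore structure the writeup so that the irreducibility reduction is isolated as the one nontrivial move, with the dimension count following immediately afterward.
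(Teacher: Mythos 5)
Your proposal is correct, but it takes a genuinely different route from the paper. The paper never discusses irreducibility at all: it applies Noether normalization to the coordinate rings $k[X]$ and $k[Y]$ (which works for arbitrary finitely generated $k$-algebras, reduced or not, domains or not), obtaining polynomial subrings $k[x_1,\dots,x_i]$ and $k[y_1,\dots,y_j]$ over which $k[X]$ and $k[Y]$ are finite modules; then $k[X] \otimes_k k[Y] \cong k[\mathbb{A}^d]$ is a finite module over $k[x_1,\dots,x_i,y_1,\dots,y_j]$, and comparing Krull dimensions gives $i+j=d$ directly. In effect the paper proves the dimension formula $\dim(X \times Y) = \dim X + \dim Y$ in passing, in a form valid over any field, whereas you quote it as a known fact for irreducible varieties and therefore must first reduce to the irreducible case. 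That reduction is where your argument genuinely uses algebraic closedness (over a non-closed field a product of irreducible varieties can be reducible), and it is sound as written: the products $X_i \times Y_j$ of components are irreducible and pairwise incomparable, so they are the components of $X \times Y \cong \mathbb{A}^d_k$, forcing a single factor on each side. What your approach buys is a stronger structural conclusion the paper does not state, namely that $X$ and $Y$ are each irreducible; what the paper's approach buys is brevity, a purely ring-theoretic argument with no appeal to irreducibility of products, and validity over arbitrary fields. One small caution if you write yours up: the transcendence-degree argument for additivity of dimension requires $k[X]$ and $k[Y]$ to be domains, so your irreducibility reduction must come first (as you correctly arranged), and you should also note explicitly that $X$ and $Y$ are nonempty since $X \times Y \cong \mathbb{A}^d_k$ is.
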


\begin{proof}
By definition, the map 
\[
\theta: X \times_{k} Y \rightarrow \bb A^d_{k} \quad \text{via} \quad (x, y) \mapsto x+y
\]
is an isomorphism. Denote by $k[X]$ and $k[Y]$ the coordinate rings of $X$ and $Y$, respectively. By Noether's normalization lemma, there exist algebraically independent elements $x_1, \ldots, x_i \in k[X]$ and $y_1, \ldots, y_j \in k[Y]$ such that $k[X]$ and $k[Y]$ are finitely generated modules over the polynomial rings $k[x_1,\dots, x_i]$ and $k[y_1,\dots,y_j],$ respectively. Thus $k[X] \otimes k[Y]=k[\bb A^d]$ is a finitely generated 
module over $k[x_1,\dots,x_i] \otimes k[y_1,\dots,y_j] = k[x_1,\dots, x_i, y_1,\dots, y_j]$. Comparing Krull dimensions we find $i+j=d$ as claimed. 
\end{proof}

In the graphical classification of tilings over finite fields (e.g. Theorem \ref{1tilecharacterization}), either $X$ or $Y$ is isomorphic to a linear subspace of $\bb A^2$. While this may not occur for general $\bb A^d$, we expect considerable restrictions on what type of $X$ and $Y$ may occur since the algebraic sets $X$ and $Y$ must behave well under the ambient vector addition in $\bb A^d$. For instance, it may be the case that the coordinate rings $k[X]$ and $k[Y]$ are isomorphic to polynomial rings, say $k[X] \cong k[f_1,\dots, f_i]$ and $k[Y] \cong k[g_1,\dots, g_j]$; that is, $X$ and $Y$ are isomorphic to affine spaces $\bb A^i$ and $\bb A^j$, respectively. If we express the generators $f_s$ and $g_t$ as polynomials in the coordinates $z_1, \dots z_d$ of $\bb A^d$, then the function 
\[
F(z_1, \dots, z_d) = (f_1(z_1,\dots, z_d), \dots, f_i(z_1,\dots, z_d), g_1(z_1, \ldots, z_d), \dots, g_j(z_1, \dots, z_d))
\] 
is a polynomial function $F: \bb A^d \to \bb A^d$ which is an isomorphism by construction. Thus it is a regular automorphism of $\bb A^d$. Note that $F$ takes $X$ in the original coordinate system to the set $\{ (f_1,\dots,f_i, 0, \dots, 0) \}$ and $Y$ to $\{ (0, \dots 0, g_1, \dots, g_j ) \}$ in the new coordinate system, which is analogous to the classification in Theorem \ref{1tilecharacterization}.

\vskip.125in 

\subsection{A polynomial approach to multi-tiling over finite fields} 
An alternate approach to the main problem may be taken by associating polynomials (in $d$ variables) to the sets $A$ and $E$. 
We begin by associating to each point $m \in {\Bbb F}_q^d$ the monomial $z^m:= z_1^{m_1} \cdots z_d^{m_d}$.   We notice that if we form the multivariate polynomial that encodes all of the information about the set $E$ by forming $\sum_{e \in E} z^e$, and then we translate the set $E$ by any element $a \in A$, this encoding corresponds to the multiplication  $z^a \sum_{e \in E} z^e$.  But  in order to respect the additive structure of the abelian group $\left( \Bbb Z/ q \Bbb Z\right)^d$ inside the exponents of these monomials, we will need to work in the ring  
$$\left( \Bbb Z/ q \Bbb Z\right)   [z_1, \dots, z_d] / \left( (z_1^q - 1), \dots, (z_d^q - 1) \right).$$

Note that when $q=p^{\ell}$, $p$ a prime, the finite field $\mathbb{F}_q$ has additive group isomorphic to $(\mathbb{Z}/p\mathbb{Z})^{\ell}$ so when 
applying results in this section to fields, $q=p$ a prime should be assumed.
 
For example, for $d=2, q=7$, we have $z_1^4z_2^5 = z_1^{11}z_2^5 \mod ((z_1^7 -1), (z_2^7 - 1))$.   We note that although we could work with $ \Bbb F_q [z_1, \dots, z_d]$ and its relevant quotients, we are here simply ignoring the multiplicative structure of the field and focusing on  its additive structure.
To simplify notation, we define the ideal 
$${\cal I}:= \left( (z_1^q - 1), \dots, (z_d^q - 1) \right)$$ in our ring 
$$\left( \Bbb Z/ q \Bbb Z\right) [z_1, \dots, z_d]$$ and work with multivariate polynomials mod $\cal I$.  We note that when we write 
$\frac{z_1^q -1}{z_1 - 1}$, we mean the polynomial  
$$1+ z_1 + z_1^2 + \cdots + z_2^{q-1},$$ so there are no rational functions here and hence we do not need to worry about singularities.   Another advantage of this polynomial approach is that we may differentiate as many times as we like with respect to each variable, obtaining many moment identities for each $d$ and $q$.

\begin{theorem}  \label{polysex} 
A set $E \subset {\Bbb F}_q^d$ multi-tiles  by a set of translation vectors $A \subset {\Bbb F}_q^d$, at level $k \ge 1$, if 
and only if 
\begin{equation} 
 \left(\sum_{e \in E  } z^e \right) \left(\sum_{a \in A} z^a \right)   =
  k \prod_{i=1}^d   \frac{z_i^{q} -1}{z_i - 1}   \ \mod  {\cal I}.   
\end{equation}
\end{theorem}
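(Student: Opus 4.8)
The plan is to recognize the quotient ring $(\mathbb{Z}/q\mathbb{Z})[z_1,\dots,z_d]/\mathcal{I}$ as a coefficient version of the group algebra of the additive group $(\mathbb{Z}/q\mathbb{Z})^d$ (which is the additive group of $\mathbb{F}_q^d$ once $q=p$ is prime, per the remark preceding the theorem), and then to match coefficients. First I would record the linear-algebra fact that the \emph{standard monomials} $\{z^x : x=(x_1,\dots,x_d),\ 0\le x_i\le q-1\}$ form a free basis of the quotient ring over the coefficient ring: since $\mathcal{I}=(z_1^q-1,\dots,z_d^q-1)$ forces $z_i^q\equiv 1$ and hence reduces every exponent modulo $q$, each element has a unique representative supported on these $q^d$ monomials, and the assignment $z^x\mapsto(x_1\bmod q,\dots,x_d\bmod q)$ identifies them with $(\mathbb{Z}/q\mathbb{Z})^d$. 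This free-basis statement is what makes equating coefficients legitimate and is the conceptual heart of the theorem.

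Next I would expand both sides and read off coefficients. Multiplying the two polynomials and reducing exponents modulo $\mathcal{I}$ (which is exactly addition in $(\mathbb{Z}/q\mathbb{Z})^d$) gives
$$\Big(\sum_{e\in E} z^e\Big)\Big(\sum_{a\in A} z^a\Big)=\sum_{e\in E}\sum_{a\in A} z^{e+a}=\sum_{x} N(x)\,z^x,\qquad N(x):=\#\{(e,a)\in E\times A : e+a=x\},$$
and the coefficient $N(x)=\#\{a\in A : x-a\in E\}=\sum_{a\in A} E(x-a)$ is precisely the tiling count in Definition \ref{ktilingdef}. For the right-hand side I would use $\frac{z_i^q-1}{z_i-1}=1+z_i+\cdots+z_i^{q-1}$ and expand $\prod_{i=1}^d(1+z_i+\cdots+z_i^{q-1})=\sum_{0\le x_i\le q-1} z^x$, so that $k\prod_i\frac{z_i^q-1}{z_i-1}=\sum_x k\,z^x$; that is, every standard monomial occurs with coefficient $k$. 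Comparing the two sides coefficientwise on the free basis then shows the polynomial identity holds if and only if $N(x)=k$ for every $x\in\mathbb{F}_q^d$, which is exactly $k$-tiling (and manifestly symmetric in $E$ and $A$, matching the duality noted after Definition \ref{ktilingdef}).

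The one step requiring genuine care — and the main obstacle — is the coefficient ring used in the comparison. The expansion makes the identity equivalent to $N(x)=k$ \emph{as coefficients}, but $N(x)$ is an honest nonnegative integer and $k$-tiling demands $N(x)=k$ as integers, whereas reducing coefficients modulo $q$ would only give $N(x)\equiv k\pmod q$ and would fail to force an exact tiling (over $\mathbb{Z}/q\mathbb{Z}$ the identity could not distinguish level $k$ from level $k+q$). I would therefore perform the coefficient comparison over $\mathbb{Z}$ — reading $\sum_e z^e$, $\sum_a z^a$ and their product as elements of $\mathbb{Z}[z_1,\dots,z_d]/\mathcal{I}$, whose standard monomials form a free $\mathbb{Z}$-basis — so that equality of coefficients genuinely recovers the integer value $k$. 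With this reading the equivalence is exact and immediate; the reduction modulo $\mathcal{I}$ is invoked only to encode the group addition in the exponents, never to collapse the coefficients.
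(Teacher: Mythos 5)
Your proof is correct and follows the same route as the paper's own (very brief) argument: the product of the encoding polynomials has, as its coefficient on the standard monomial $z^x$, exactly the covering number $N(x)=\sum_{a\in A}E(x-a)$, while $k\prod_{i=1}^d\frac{z_i^q-1}{z_i-1}$ expands to $k\sum_x z^x$, so the identity is coefficientwise equivalent to $k$-tiling. Where you genuinely depart from --- and improve on --- the paper is the coefficient ring, and your caution here is warranted. The paper works in $(\mathbb{Z}/q\mathbb{Z})[z_1,\dots,z_d]/\mathcal{I}$, and over $\mathbb{Z}/q\mathbb{Z}$ the ``if'' direction of the theorem as literally stated is false, for precisely the reason you identify: coefficient comparison only yields $N(x)\equiv k\pmod q$. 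For a concrete failure, let $E=A=L$ be a line through the origin in $\mathbb{F}_3^2$; then $\bigl(\sum_{e\in L}z^e\bigr)^2=3\sum_{e\in L}z^e\equiv 0$ and $3\prod_{i=1}^2\frac{z_i^3-1}{z_i-1}\equiv 0$ modulo $(3,\mathcal{I})$, so the identity holds with $k=3$, yet $L$ does not $3$-tile by $L$ (points off $L$ are covered zero times); similarly $E=A=\mathbb{F}_3$ in $d=1$ satisfies the mod-$3$ identity with $k=6$ although it is a $3$-tiling, not a $6$-tiling. Your fix --- reading the identity in $\mathbb{Z}[z_1,\dots,z_d]/\mathcal{I}$, the integral group ring of $(\mathbb{Z}/q\mathbb{Z})^d$, which is free as a $\mathbb{Z}$-module on the standard monomials, so that $\mathcal{I}$ reduces exponents but never collapses coefficients --- is exactly right and makes the equivalence exact. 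It is also the reading the paper implicitly needs afterwards: specializing $z$ to complex roots of unity to recover Theorem \ref{characterization}, or setting all $z_j=1$ to get $|A|\,|E|=kq^d$, only makes sense with integer (or complex) coefficients, since over $\mathbb{Z}/q\mathbb{Z}$ the latter would reduce to the vacuous $0=0$.
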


We note that the Fourier result follows immediately from Theorem \eqref{polysex} by specializing all of the coordinates of $z \in \Bbb C^d$ to be roots of unity 
$$z_j := e^{2\pi i k_j  / q},$$ and in this case all elements of the ideal $\cal I$ vanish identically, considering everything over $\Bbb C$ now. Furthermore, letting each $z_j = 1$ in Theorem \eqref{polysex}  retrieves the arithmetic constraint of Theorem \ref{characterization}, namely that  $|A| |E| = k q^d$.

With the same notation of tiling at level $k$, we have the following consequence of the polynomial identity given by Theorem \ref{polysex}, using differentiation. 

\begin{corollary} \label{sumpolysex} 
\begin{align}
 &  |A|  \left(\sum_{e \in E  } e  \right)
 + |E|  \left(\sum_{a \in A} a \right)  = 0 \text{ in } \left( \Bbb Z/{q \Bbb Z} \right)^d.
\end{align}
\end{corollary}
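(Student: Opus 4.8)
The plan is to differentiate the polynomial identity of Theorem~\ref{polysex} once with respect to each variable $z_i$ and then specialize at $z=(1,\dots,1)$, reading off one scalar congruence per coordinate and assembling these into the claimed vector identity in $(\mathbb{Z}/q\mathbb{Z})^d$.

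First I would verify that partial differentiation is legitimate in this setting, i.e.\ that $\partial/\partial z_i$ descends to the quotient ring $(\mathbb{Z}/q\mathbb{Z})[z_1,\dots,z_d]/\mathcal I$. Since $\partial(z_j^q-1)/\partial z_i=qz_j^{q-1}\equiv 0$ because $q\equiv 0$ in $\mathbb{Z}/q\mathbb{Z}$, each generator of $\mathcal I$ is sent into $\mathcal I$, so the ideal is stable under $\partial/\partial z_i$ and differentiation is well defined on the quotient. Evaluation at $\vec 1=(1,\dots,1)$ is a ring homomorphism that annihilates every generator $z_j^q-1$, so the composite operation ``differentiate, then set $z=\vec 1$'' is well defined modulo $\mathcal I$.

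Writing $P(z)=\sum_{e\in E}z^e$, $Q(z)=\sum_{a\in A}z^a$ and $R(z)=k\prod_{i=1}^d\frac{z_i^q-1}{z_i-1}$, I would apply $\partial/\partial z_i$ to $PQ=R$ and invoke the product rule to obtain $(\partial_i P)Q+P(\partial_i Q)=\partial_i R$. Setting $z=\vec 1$ and using $P(\vec 1)=|E|$, $Q(\vec 1)=|A|$, together with $\partial_i z^e|_{\vec 1}=e_i$ (hence $(\partial_i P)(\vec 1)=\sum_{e\in E}e_i$ and $(\partial_i Q)(\vec 1)=\sum_{a\in A}a_i$), the left-hand side becomes exactly the $i$-th coordinate of $|A|\sum_{e\in E}e+|E|\sum_{a\in A}a$.

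The crux is the right-hand side. With $g_j(z)=1+z+\cdots+z^{q-1}$ we have $R=k\prod_j g_j(z_j)$, so $\partial_i R=k\,g_i'(z_i)\prod_{j\neq i}g_j(z_j)$, and since $g_j(1)=q$ the evaluation at $\vec 1$ is $k\,g_i'(1)\,q^{d-1}$, where $g_i'(1)=1+2+\cdots+(q-1)=\tfrac{q(q-1)}{2}$. The main obstacle is the bookkeeping of powers of $q$ needed to see that $k\tfrac{q(q-1)}{2}q^{d-1}\equiv 0 \pmod q$: when $d\ge 2$ the factor $q^{d-1}$ already forces divisibility, and when $q$ is odd the factor $\tfrac{q(q-1)}{2}=q\cdot\tfrac{q-1}{2}$ is itself a multiple of $q$. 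Once the right-hand side is seen to vanish mod $q$, the $d$ coordinate congruences combine into $|A|\sum_{e\in E}e+|E|\sum_{a\in A}a=0$ in $(\mathbb{Z}/q\mathbb{Z})^d$, as desired.
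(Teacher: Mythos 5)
Your proposal is correct and takes essentially the same route as the paper: differentiate the identity of Theorem~\ref{polysex} coordinate by coordinate, evaluate at $z=\vec{1}$, and check that the right-hand side vanishes mod $q$ — the only cosmetic differences being that the paper applies the operator $z_j\,\partial/\partial z_j$ rather than $\partial/\partial z_j$ (immaterial once $z=\vec{1}$, though it sets up the iteration used for Corollary~\ref{sumpolysex2}), and that your preliminary check that $\partial/\partial z_i$ preserves $\mathcal{I}$ is left implicit in the paper. One remark: your explicit case split ($d\ge 2$, or $q$ odd) is actually more careful than the paper, which simply asserts $k\,q^{d-1}\,\tfrac{q(q-1)}{2}\equiv 0 \pmod q$; the single case both arguments leave open, $d=1$ with $q$ even and $k$ odd, is one where that quantity equals $k\,\tfrac{q}{2}\not\equiv 0 \pmod q$ and the corollary is in fact false (take $q=2$, $E=\{0,1\}$, $A=\{0\}$, $k=1$), so this residual gap is a defect of the statement itself (harmless in the paper's intended setting $q=p$ an odd prime), not of your argument.
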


We may also obtain higher moment identities by further differentiation of the identity of Theorem \ref{polysex}, and here is the next such moment identity. 

\begin{corollary} \label{sumpolysex2} 
\begin{align}
 &    \left(\sum_{e \in E  } e_j^2  \right)|A| 
 +  \left(\sum_{a \in A} a_j^2  \right)  |E| 
 +  2 \left(\sum_{e \in E  } e_j \right)    \left(\sum_{a \in A} a_j  \right) \\
     &=  0   \mod q,
\end{align}
for each $1 \leq j \leq d$.
\end{corollary}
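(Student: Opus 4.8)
The plan is to extract the second moment by applying the logarithmic derivative $D_j := z_j \frac{\partial}{\partial z_j}$ twice to the identity of Theorem~\ref{polysex} and then specializing at $z=(1,\dots,1)$, in exact parallel with the first-moment computation that yields Corollary~\ref{sumpolysex}. The point of using $D_j$ rather than $\partial/\partial z_j$ is that it acts diagonally on monomials, $D_j z^m = m_j z^m$, so that $D_j^2 z^m = m_j^2 z^m$; hence applying $D_j^2$ to $\sum_e z^e$ and evaluating at $z=\vec 1$ produces $\sum_e e_j^2$ directly, which is precisely the type of sum appearing in Corollary~\ref{sumpolysex2}.

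Before differentiating I would record the two structural facts that make this legitimate over the quotient ring $\mathcal{R} := (\mathbb{Z}/q\mathbb{Z})[z_1,\dots,z_d]/{\cal I}$. First, because the coefficients lie in $\mathbb{Z}/q\mathbb{Z}$ we have $q\equiv 0$, so
$$ D_j(z_i^q - 1) = \delta_{ij}\, q\, z_j^{q} = 0 \text{ in } \mathcal{R}; $$
consequently $D_j({\cal I}) \subseteq {\cal I}$ and $D_j$ descends to a well-defined derivation of $\mathcal{R}$ (so $D_j^2$ makes sense there as well). Second, since $1^q-1=0$, evaluation at $z=\vec 1$ is a ring homomorphism $\mathrm{ev}_1 : \mathcal{R} \to \mathbb{Z}/q\mathbb{Z}$. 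Thus I may apply $D_j^2$ to both sides of the identity inside $\mathcal{R}$ and then apply $\mathrm{ev}_1$.

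Writing $P = \sum_{e\in E} z^e$ and $Q = \sum_{a\in A} z^a$, the Leibniz rule for the derivation $D_j$ gives
$$ D_j^2(PQ) = (D_j^2 P)\,Q + 2\,(D_j P)(D_j Q) + P\,(D_j^2 Q), $$
and evaluating at $\vec 1$ — using $\mathrm{ev}_1(P)=|E|$, $\mathrm{ev}_1(D_j P)=\sum_e e_j$, $\mathrm{ev}_1(D_j^2 P)=\sum_e e_j^2$ and the analogous identities for $Q$ — reproduces exactly the left-hand side of Corollary~\ref{sumpolysex2}. For the right-hand side I would write $R := k\prod_{i=1}^d S_i$ with $S_i = 1 + z_i + \cdots + z_i^{q-1}$; since $D_j$ only affects the factor indexed by $i=j$,
$$ D_j^2 R = k\,(D_j^2 S_j)\prod_{i\neq j} S_i, $$
and $\mathrm{ev}_1(S_i)=q\equiv 0 \pmod q$ for every $i$.

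The main thing to verify carefully is that this right-hand side vanishes modulo $q$. For $d\geq 2$ this is immediate: the surviving product $\prod_{i\neq j} S_i$ evaluates to $q^{\,d-1}\equiv 0 \pmod q$, so $\mathrm{ev}_1(D_j^2 R)=0$ and the corollary follows. I expect the only genuine subtlety to be this step together with the well-definedness of $D_j$ modulo $q$ established above; the Leibniz bookkeeping is otherwise routine. It is worth noting that the analogous cancellation in the first moment (Corollary~\ref{sumpolysex}) survives even for $d=1$ because $\sum_{m=0}^{q-1} m \equiv 0 \pmod q$ for odd $q$, whereas here the residual $d=1$ term is $k\sum_{m=0}^{q-1} m^2$, which is $\equiv 0 \pmod p$ only when $p>3$; so the clean vanishing asserted here is really a feature of the factor $q^{\,d-1}$ and holds for all tilings in $\mathbb{F}_q^d$ with $d\geq 2$.
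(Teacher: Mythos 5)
Your proposal is correct and follows essentially the same route as the paper: the paper likewise applies the operator $z_j \frac{\partial}{\partial z_j}$ twice to the identity of Theorem~\ref{polysex}, uses the Leibniz rule, and specializes at $z=\vec{1}$, where the right-hand side picks up the factor $q^{d-1}$ and vanishes modulo $q$. Your additional checks (that $D_j$ descends to the quotient by ${\cal I}$ over $\mathbb{Z}/q\mathbb{Z}$, that evaluation at $\vec{1}$ is a ring homomorphism, and the caveat about the residual $d=1$ term) are refinements of details the paper leaves implicit, not a different argument.
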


We note that higher moment formulas continue to exist, using similar methods, albeit they get more messy.   We've given here an analogue of the `mean' and of the `second moment' for any finite tiling sets $E$ and $A$.  

\vskip.125in 

\section{Proof of Theorem \ref{characterization}: Fourier characterization of tilings.} 

\begin{proof}   From the usual theory of the discrete Fourier transform, it is straightforward that 
\begin{equation} \label{inversion} f(x)=\sum_{m \in {\Bbb F}_q^d} \chi(x \cdot m) \widehat{f}(m) \end{equation} and 
\begin{equation} \label{plancherel} \sum_{m \in {\Bbb F}_q^d} {|\widehat{f}(m)|}^2=q^{-d} \sum_{x \in {\Bbb F}_q^d} {|f(x)|}^2. \end{equation} 

It follows that 
$$ k=\sum_{a \in A} E(x-a)=\sum_{a \in A} \sum_{m \in {\Bbb F}_q^d} \chi((x-a) \cdot m) \widehat{E}(m)$$
$$=q^d \sum_{m \in {\Bbb F}_q^d} \widehat{A}(m) \widehat{E}(m)$$
$$=|A||F|q^{-d}+\sum_{m \not=\vec{0}} \chi(x \cdot m) \widehat{A}(m) \widehat{E}(m).$$

It is not difficult to see that since $E$ multi-tiles by $A$, $|A||F|q^{-d}=k$, therefore 
$$ D(x)=\sum_{m \not=\vec{0}} \chi(x \cdot m) \widehat{A}(m) \widehat{E}(m) \equiv 0 \ \text{for all} \ x \in {\Bbb F}_q^d.$$ 

In particular, this means that $\sum_{x \in {\Bbb F}_q^d} {|D(x)|}^2=0$, so by (\ref{plancherel}), 
$$ \sum_{m \in {\Bbb F}_q^d} {|\widehat{D}(m)|}^2=0.$$ 

Observe that 
$$ \widehat{D}(\vec{0})=0,$$ therefore, 
$$ 0=\sum_{m \in {\Bbb F}_q^d} {|\widehat{D}(m)|}^2=\sum_{m \not=\vec{0}} {|\widehat{A}(m)|}^2 {|\widehat{E}(m)|}^2,$$ from which we instantly conclude that 
$$ \widehat{A}(m) \cdot \widehat{E}(m)=0 \ \text{for every} \ m \not=\vec{0}.$$ 

This completes the proof. 
\end{proof}

\section{Proof of Theorem \ref{1tilecharacterization}, Theorem \ref{ktilecharacterization} and Theorem~\ref{1tilecharacterization3space}: Tiling characterization in $\mathbb{F}_p^2$ and $\mathbb{F}_p^3$} 

In this section, $p$ is a prime.

\begin{lemma} \label{lemma: novanish}
Suppose that $E$ $k$-tiles ${\Bbb F}_p^d$ by $A$. Then $\hat{E}(m)$ is nonzero for all $m$ if and only if $|E|=k$ and $A={\Bbb F}_p^d$.
Furthermore $1 \leq k \leq p^d$. \end{lemma}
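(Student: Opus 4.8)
The plan is to reduce everything to the Fourier characterization of Theorem~\ref{characterization}, which (with $q=p$) says that $E$ $k$-tiles ${\Bbb F}_p^d$ by $A$ exactly when $|A|\,|E|=kp^d$ and $\widehat{E}(m)\widehat{A}(m)=0$ for all $m\neq\vec{0}$. The one extra tool I would use is the inversion formula~\eqref{inversion}, which detects when an indicator function is constant: if $\widehat{A}(m)=0$ for every $m\neq\vec{0}$, then $A(x)=\sum_{m}\chi(x\cdot m)\widehat{A}(m)=\widehat{A}(\vec{0})$ is independent of $x$.

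For the forward implication I would argue as follows. Suppose $\widehat{E}(m)\neq 0$ for every $m$. Since $\widehat{E}(m)\widehat{A}(m)=0$ for all $m\neq\vec{0}$, nonvanishing of $\widehat{E}$ forces $\widehat{A}(m)=0$ for all $m\neq\vec{0}$. By the inversion remark above, the characteristic function $A(x)$ is constant, equal to $\widehat{A}(\vec{0})=p^{-d}|A|$. A $\{0,1\}$-valued constant function is identically $0$ or identically $1$; since a $k$-tiling with $k\geq 1$ forces $A\neq\emptyset$, we get $A(x)\equiv 1$, i.e. $A={\Bbb F}_p^d$ and $|A|=p^d$. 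Substituting into $|A|\,|E|=kp^d$ gives $|E|=k$.

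For the reverse implication, assume $A={\Bbb F}_p^d$ and $|E|=k$. Then the tiling identity is automatic, since $\sum_{a\in{\Bbb F}_p^d}E(x-a)=\sum_{y}E(y)=|E|=k$ for every $x$. It remains to argue that $\widehat{E}$ is nowhere vanishing, and this is the step I expect to be the main obstacle: when $A={\Bbb F}_p^d$ we have $\widehat{A}(m)=0$ for all $m\neq\vec{0}$ regardless of $E$, so the tiling condition $\widehat{E}\widehat{A}=0$ is vacuous and places no constraint on $\widehat{E}$. In the regime relevant to $1$-tilings, where $k=1$ and hence $|E|=1$, the conclusion is immediate: $E=\{x_0\}$ gives $\widehat{E}(m)=p^{-d}\chi(-x_0\cdot m)$, which has constant modulus $p^{-d}$ and therefore never vanishes. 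I would first confirm that this singleton case is the intended scope, since for general $k$ a set $E$ with $|E|=k$ may have $\widehat{E}$ vanishing somewhere, and only then attempt a uniform statement.

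Finally, the bound $1\leq k\leq p^d$ follows from the arithmetic condition $k=p^{-d}|E|\,|A|$. The covering function $\sum_{a\in A}E(x-a)$ equals $|A\cap(x-E)|$, which is identically zero exactly when $E$ or $A$ is empty; a genuine $k$-tiling therefore has $E,A\neq\emptyset$, so the constant value satisfies $k\geq 1$, while $1\leq|E|,|A|\leq p^d$ yields $k=p^{-d}|E|\,|A|\leq p^{-d}p^{2d}=p^d$.
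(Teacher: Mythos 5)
Your forward implication is exactly the paper's proof: nonvanishing of $\widehat{E}$ together with $\widehat{E}(m)\widehat{A}(m)=0$ for $m\neq\vec{0}$ forces $\widehat{A}$ to vanish away from the origin, Fourier inversion then makes the indicator of $A$ constant, nonemptiness gives $A={\Bbb F}_p^d$, and the counting identity $|A|\,|E|=kp^d$ gives $|E|=k$. The interesting divergence is what you do afterwards. The paper's proof simply stops there: it never addresses the reverse implication at all, even though the lemma is phrased as an equivalence. Your hesitation about that direction is not a defect of your proof but a defect of the statement: for $k\geq 2$ the converse is false. Take $E=\{(t,0):t\in{\Bbb F}_p\}\subset{\Bbb F}_p^2$ and $A={\Bbb F}_p^2$, so that $E$ $p$-tiles by $A$ with $|E|=p=k$ and $A$ the whole space, yet
$$\widehat{E}(m)=p^{-2}\sum_{t\in{\Bbb F}_p}\chi(-tm_1)=0 \quad\text{whenever}\ m_1\neq 0,$$
so $\widehat{E}$ does vanish at nonzero frequencies. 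The equivalence is genuine only when $k=1$, where your singleton computation $|\widehat{E}(m)|=p^{-d}$ settles it. Since the paper invokes the lemma only in its forward direction (in the proofs of Theorems~\ref{1tilecharacterization}, \ref{ktilecharacterization} and~\ref{1tilecharacterization3space}), nothing downstream is harmed, but you are right that the ``if and only if'' is overstated. One further point in your favor: you derive $1\leq k\leq p^d$ unconditionally from $k=p^{-d}|E|\,|A|$ and the nonemptiness of both sets, whereas the paper obtains the bound only inside the nonvanishing case (from $k=|E|\leq p^d$); your argument is the one that actually supports the unconditional ``furthermore'' clause of the statement.
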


To see this, observe that since $\hat{E}(m) \hat{A}(m) = 0$ for all nonzero $m$ and $\hat{E}(m)$ is assumed nonzero, we conclude $\hat{A}(m)=0$ for all $m \not=\vec{0}$. Since $A(x)=\sum_{m \in {\Bbb F}_p^d} \chi(x \cdot m) \widehat{A}(m)$, we conclude that $A$ is a constant function and hence that $A$ must be the whole space since $A$ cannot be the empty set. This forces $E$ to consist of $k$ distinct points since $E$ $k$-tiles ${\Bbb F}_p^d$ by $A$. Since $E \subset {\Bbb F}_p^d$, we also conclude that $1 \leq k \leq p^d$. 

\begin{proposition} \label{proposition: equivalentFouriervanish}
Let $E$ be a subset of $\mathbb{F}_p^d$ and $m$ a fixed nonzero vector of $\mathbb{F}_p^d$. Then the following are equivalent: \begin{itemize} 

\item (1) $\hat{E}(m) = 0$. 

\item (2) $E$ is equidistributed on the hyperplanes $H_0, H_1, \dots, H_{p-1}$ where 
$$H_j = \{ x: x \cdot m = j \}.$$ 

\item (3) $\widehat{E}(rm) = 0$ for all $r \in \mathbb{F}_p^*$. 

\end{itemize} 

\end{proposition}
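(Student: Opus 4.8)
The plan is to translate all three conditions into statements about the nonnegative integers $n_j := |E \cap H_j|$, the number of points of $E$ lying on the hyperplane $H_j = \{x : x\cdot m = j\}$, and then to prove the cycle of implications $(1)\Rightarrow(2)\Rightarrow(3)\Rightarrow(1)$. First I would fix $\zeta := \chi(1)$; since $\chi$ is a nontrivial additive character and $p$ is prime, $\zeta$ is a primitive $p$-th root of unity. Grouping the points of $E$ according to the value $j = x\cdot m$, I rewrite, for any $r$,
\[
\widehat{E}(rm) = p^{-d}\sum_{x\in E}\chi\bigl(-r(x\cdot m)\bigr) = p^{-d}\sum_{j=0}^{p-1} n_j\,\zeta^{-rj}.
\]
Since the $H_j$ partition $\mathbb{F}_p^d$, we have $\sum_j n_j = |E|$, and condition (2) is exactly the assertion $n_0 = n_1 = \cdots = n_{p-1}$.

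The heart of the matter is the implication $(1)\Rightarrow(2)$. Condition (1) reads $\sum_{j=0}^{p-1} n_j \zeta^{-j} = 0$, so the integer polynomial $g(X) := \sum_{j=0}^{p-1} n_j X^j$ vanishes at $X = \zeta^{-1}$. Because $p$ is prime, $\zeta^{-1}$ is again a primitive $p$-th root of unity, whose minimal polynomial over $\mathbb{Q}$ is the cyclotomic polynomial $\Phi_p(X) = 1 + X + \cdots + X^{p-1}$, irreducible of degree $p-1$ (by Eisenstein after the substitution $X \mapsto X+1$). Hence $\Phi_p \mid g$ in $\mathbb{Q}[X]$; but $\deg g \le p-1 = \deg \Phi_p$, so $g$ must be a constant multiple of $\Phi_p$, forcing all coefficients $n_j$ to coincide. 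This is precisely (2). I expect this step to be the main obstacle, both because it carries the real content and because it is the unique place where the primality of $p$ is essential: for composite moduli there exist vanishing sums of roots of unity that are \emph{not} equidistributed, so the equivalence genuinely fails outside prime fields.

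The remaining implications are routine orthogonality computations. For $(2)\Rightarrow(3)$, if $n_j = n$ for every $j$, then for each $r \in \mathbb{F}_p^*$ the element $\zeta^{-r}$ is a nontrivial $p$-th root of unity, so $\sum_{j=0}^{p-1}\zeta^{-rj} = 0$ as a geometric sum, whence $\widehat{E}(rm) = p^{-d} n \sum_j \zeta^{-rj} = 0$. Finally $(3)\Rightarrow(1)$ is immediate upon specializing $r = 1$. The degenerate case $E = \emptyset$, where $g \equiv 0$, satisfies all three conditions trivially and may be noted separately. Assembling the cycle completes the proof.
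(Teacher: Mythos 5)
Your proposal is correct and follows essentially the same route as the paper: both reduce $\hat{E}(m)=0$ to a vanishing integer-coefficient combination of powers of a primitive $p$-th root of unity and invoke the Eisenstein irreducibility of $\Phi_p(X)=1+X+\cdots+X^{p-1}$ to force equal coefficients, which is exactly the equidistribution statement. The only cosmetic difference is at the periphery: where you close the cycle via a geometric-sum computation for $(2)\Rightarrow(3)$, the paper simply observes that replacing $m$ by $rm$ leaves the family of hyperplanes unchanged, so $(2)$ and $(3)$ are equivalent outright.
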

\begin{proof}
Suppose that (1)  holds. Then $0 = \hat{E}(m) = \frac{1}{p^d} \sum_{x \in E} \chi(-m \cdot x)$
where $\chi$ is a nontrivial additive character of $\mathbb{F}_p$. We may assume $\chi(-t) = \xi^t$ where $\xi$ is a primitive $p$th root of unity.

Then we have $0 = \sum_{k=0}^{p-1} a_k \xi^k$ where $a_k$ is the number of elements of $E$ on the hyperplane $H_k$.
This implies $\xi$ is a zero of the polynomial $a_0 + a_1x + a_2 x^2 + \dots + a_{p-1} x^{p-1}$ with rational coefficients $a_j$. However it is well known the minimal polynomial (over $\mathbb{Q}$) 
of $\xi$ is $1+x+ x+ \dots + x^{p-1} = \frac{x^p-1}{x-1}$ which is irreducible by Eisenstein's criterion. Thus 
$a_0 + a_1x + a_2x^2 + \dots a_{p-1}x^{p-1}$ is a $\mathbb{Q}$-multiple of $1+x+x^2 + \dots +x^{p-1}$ and so $a_0=a_1=a_2 = \dots = a_{p-1}$. In other words, $E$ is equidistributed over the hyperplanes $H_0, \dots, H_{p-1}$. Thus (2) holds. As all the steps in the argument are reversible (1) and (2) are equivalent.

Finally noting that replacing $m$ with $rm$, $r$ a nonzero scalar, does not change the corresponding set of hyperplanes, we see that (2) and (3) are equivalent.
\end{proof}

Note that this last proposition is not true for Fourier transforms of general complex valued functions over prime fields. Indeed one can create a complex function 
with any prescribed set of zeros on the Fourier space side and then use the inverse Fourier transform to get an example of a complex function 
where the equivalences of Proposition~\ref{proposition: equivalentFouriervanish} don't hold. However it is true for rational-valued
 functions such as characteristic functions 
-more results of this nature that hold only for rational-valued functions are discussed in Appendix~\ref{section: rational}.

\subsection{Proof of Theorem \ref{1tilecharacterization}} Note that $|E||A|=p^2$, so $|E|=1, p$ or $p^2$ if $|E|=1$ then $E$ is a point and we are done. If $|E|=p^2$ then $E$ is the whole plane and we are done, so without loss of generality $|E|=p$.

If $\hat{E}(m)$ never vanishes then by Lemma~\ref{lemma: novanish}, $E$ is a point and we are done. On the other hand if 
$\hat{E}(m)=0$ for some nonzero $m$, then it vanishes on $L$, the line passing through the origin and $m \not=\vec{0}$
.
Thus if we set $L^{\perp}$ to be the line through the origin, perpendicular to $m$, we see that 
$$\widehat{L^{\perp}}(s)\widehat{E}(s)=0$$ for all nonzero $s$. This is because by (\ref{subspaceft}) 
$\widehat{L^{\perp}}(s)=q^{-(d-1)} L(s)$. Since $|L^{\perp}|=p=|E|$ we then see that $E$ $1$-tiles ${\Bbb F}_p^2$ by $L^{\perp}$. 

Since $\widehat{E}(m)=0$ we see that $E$ is equidistributed on the set of $p$ lines $H_t=\{ x | x \cdot m = t\}, t \in \mathbb{F}_p$.  
Since $|E|=p$ this means there is exactly one point of $E$ on each of these lines. 

We will now choose a coordinate system in which $E$ will be represented as a graph of a function. The coordinate system will either be an orthogonal system 
or an isotropic system depending on the nature of the vector $m$.

Case 1: $m \cdot m \neq 0$: We may set $e_1 = m$ and $e_2$ a vector orthogonal to $m$. $\{ e_1, e_2 \}$ is an orthogonal basis because $e_2$ does not lie on the line throough $m$ as this line is not isotropic. If we take a general vector $\hat{x} = x_1 e_1 + x_2 e_2$ we see that $\hat{x} \cdot m = x_1 (m \cdot m)$ 
and so the lines $H_t, t \in \mathbb{F}_p$ are the same as the lines of constant $x_1$ coordinate with respect to this orthogonal basis $\{ e_1, e_2 \}$.
Thus there is a unique value of $x_2$ for any given value of $x_1$ so that $x_1 e_1 + x_2 e_2 \in E$. Thus 
$E=\{ x_1 e_1 + f(x_1) e_2: x_1 \in \mathbb{F}_p \}=Graph(f)$ 
for some function $f: \mathbb{F}_p \to \mathbb{F}_p$.

Case 2: $m \cdot m = 0$: We may set $e_1 = m$. In this case any vector orthogonal to $e_1$ lies on the line generated by $e_1$ and so cannot be part of a basis 
with $e_1$. Instead we select $e_2$ off the line generated by $e_1$ and scale it so that $e_1 \cdot e_2 = 1$. Then by subtracting a suitable multiple of $e_1$ from $e_2$ one can also ensure $e_2 \cdot e_2 = 0$. Thus $\{ e_1, e_2 \}$ is a basis 
consisting of two linearly independent isotropic vectors. With respect to this basis, the dot product is represented by the matrix 
$$
\mathbb{A} = \begin{bmatrix} 0 & 1 \\ 1 & 0 \end{bmatrix}
$$
which exhibits the plane as the hyperbolic plane. This case can only occur when $p=1$ mod $4$. 
Note when we express a general vector $\hat{x}=x_1 e_1 + x_2 e_2$ with respect to this basis we have $\hat{x} \cdot m = x_2$ thus the lines 
$\{H_t: t \in \mathbb{F}_p \}$ are the same as the lines of constant $x_2$ coordinate with respect to this basis and $E$ has a unique point on each of these lines.
Thus $E=\{ f(x_2) e_1 + x_2 e_2: x_2 \in \mathbb{F}_p \} = Graph(f)$ is a graph with respect to this isotropic coordinate system.

Finally we note any function $f: \mathbb{F}_p \to \mathbb{F}_p$ is given by a polynomial of degree at most $p-1$, explicitly expressed in the form 
$$ f(x) = \sum_{k \in \mathbb{F}_p} f(k) \frac{\Pi_{j \neq k} (x-j)}{\Pi_{j \neq k} (k-j)}$$
expresses $f$ as such a polynomial in $x$.

\vskip.125in 

\subsection{Proof of Theorem \ref{ktilecharacterization}} 

If $\widehat{E}(m)$ never vanishes then by Lemma~\ref{lemma: novanish} we are done so we may assume $\widehat{E}(m)=0$ for some nonzero $m$. Then $E$ vanishes on the nonzero elements of the line $L$ through $m$ and the origin. By proposition~\ref{proposition: equivalentFouriervanish}, $E$ is equidistributed on the $p$ lines parallel to $L^{\perp}$  with $s$ elements per line. Note $1 \leq s \leq p-1$ as if $s=p$ then $E$ is the whole plane already which was already covered. Then $|E|=ps$ and we can partition $E$ into $s$ sets $E_1, \dots, E_s$ each with exactly one element on each of the lines parallel to $L^{\perp}$.

As we argued above, this means $E_j=Graph(f_j)$ of some function $f_j: \mathbb{F}_p \to \mathbb{F}_p$ once we take coordinates $x$ parallel to $L$ and $y$ parallel to $L^{\perp}$. Since $|E||A|=kp^2$ we have $sp|A|=kp^2$ and so $|A|=\frac{kp}{s}$ so $s$ must divide $kp$. Since $s \leq p-1$, $s$ is relatively prime to $p$ and so $s$ divides $k$. This completes the proof of Theorem \ref{ktilecharacterization}. 

\vskip.25in 

\subsection{Proof of Theorem \ref{1tilecharacterization3space}}

We proceed as in the proof of Theorem~\ref{1tilecharacterization}. Let $(E,A)$ be a tiling pair in $\mathbb{F}_p^d$ 
As $|E||A|=p^d$ then $|E|=p^{d-1}$ implies $|A|=p$ so at least one of $|E|, |A|$ is $p$. WLOG we will assume $|E|=p$ as the argument when $|A|=p$ is similar with the roles of $E$ and $A$ swapped.

We know that $\hat{E}(m)\hat{A}(m)=0$ for all nonzero $m$. If $\hat{A}(m)=0$ for all nonzero $m$ then Fourier inversion shows that the characteristic function 
of $A$ is a constant and hence that $A$ is the whole space and $|E|=1$ which contradicts our assumptions. Thus $\hat{A}(m)$ is nonzero for some nonzero $m$ 
and hence $\hat{E}(m)=0$ for some nonzero $m$. Thus $E$ equidistributes on the $p$ hyperplanes perpendicular to $m$. Let $H=H_m$ be the hyperplane 
through the origin perpendicular to $m$ and let $L$ be a line such that $H \oplus L = \mathbb{F}_p^d$. Let $\{e_1,\dots, e_d\}$ be a basis 
such that $e_1$ lies in $L$ and $\{e_2, \dots, e_d \}$ lie in $H$. Now $E$ equdistributes on the $p$ hyperplanes of constant $e_1$-coordinate and so has 
exactly one element per hyperplane as $|E|=p$. Thus there is a function $f=(f_2,\dots,f_d): \mathbb{F}_p \to \mathbb{F}_p^{d-1}$ such that 
$E=\{ x_1 e_1 + f_2(x_1) e_2 + \dots + f_d(x_1) e_d | x_1 \in \mathbb{F}_p \}=Graph(f)$ and so the tiling $(E,A)$ is graphical as claimed.

Now in $3$ dimensions, $d=3$ and if $(E,A)$ is a tiling pair then $|E|=1,p,p^2,p^3$. The cases $|E|=1,p^3$ are trivial and trivially graphical while 
the cases $|E|=p,p^2$ fall under the last argument.

\vskip.25in

\section{Proof of Theorem \ref{icp}: Isotropic circle packings in the plane}

\vskip.125in 

Once we show that the $q$ circles of radius $c \neq 0$ centered at the $q$ points on an isotropic line $L \subseteq P$ are disjoint, we will have shown $q \leq P(q,c)$. Note that $q$ circles would account for $q(q-1)=q^2-q$ points as circles of nonzero radius have size $q-1$ when $q=1 \mod 4$.  Thus only $q$ unused points remain allowing for at most one more circle to be packed. Thus $P(q,c) \leq q+1$.

So it remains only to show the disjointness of two different $c$-circles centered on an isotropic line. Consider $(u,v)$ and $(s,t)$ two distinct elements on an isotropic line $L$. A point of intersection $(x,y)$ of the circles of radius $c \neq 0$ about $(u,v)$ and 
$(s,t)$ must satisfy the two equations
\begin{eqnarray*}
(x-u)^2 + (y-v)^2 &=& c \\
(x-s)^2 + (y-t)^2 &=& c
\end{eqnarray*}
As $L$ is isotropic we have $u^2+v^2=s^2+t^2=0$ so expanding the two equations and subtracting we get
$-2x(u-s)-2y(v-t) = 0$ i.e., $(x,y) \cdot (u-s, v-t) = 0$. As $(u-s, v-t)$ is a nonzero element of the isotropic line $L$ which is its own perp, we conclude that $(x,y) \in L$ and so $x^2+y^2=0$ and $(x,y) \cdot (u,v)=0$. Plugging these into the first equation of the two above, we find $0 = c$ which gives a contradiction. Thus the two circles of radius $c$ centered at different points of the isotropic line must be disjoint. This completes the proof of the theorem.

Note that any point on the isotropic line $L$ has distance zero from any other point on the line $L$ and hence does not lie on any circle of radius $c \neq 0$ centered on the line. Thus $L$ lies in the complement of the union of the circles in this packing. As $|L|=q=q^2-q(q-1)$ we conclude that $L$ is the complement of this packing.

Note the isotropic packing of the last theorem is maximal as one can show that the complementary isotropic line cannot contain any circle of nonzero radius. However this does not necessary preclude the existence of a completely different maximum circle packing achieving $P(q,c)=q+1$.

\vskip.125in 

\section{Proof of Theorem \ref{polysex}: Polynomial method results} 

\vskip.125in 
   
Translating the set $E$ by all elements of the set $A$ corresponds to the product of polynomials 
$$ \left(\sum_{e \in E} z^e \right) \left(\sum_{a \in A} z^a \right) \mod \cal I.$$ 

On the other hand, this corresponds to multi-tiling at level $k$ if and only if the latter product equals $k \sum_{f \in {\Bbb F}_q^d} z^f$, which is the required identity.

\vskip.125in 

\subsection{Proof of Corollary \ref{sumpolysex}}
Fixing an index $j$ and focusing on the variable $z_j$, we differentiate both sides of \eqref{polysex} with respect to $z_j$, and then multiply by $z_j$ (i.e. we apply the differential operator $z_j \frac{\partial}{\partial z_j}$), obtaining:
\begin{align}
 &    \left(\sum_{e \in E  } e_j z^e \right) \left(\sum_{a \in A} z^a \right)  
 +  \left(\sum_{e \in E  } z^e \right) \left(\sum_{a \in A} a_j z^a \right)  
  = k \frac{\partial}{\partial z_j} 
  \left( 
   \frac{z_1^q -1}{z_1-1} \frac{z_2^q -1}{z_2-1}   \cdots    \frac{z_d^q -1}{z_d-1}
   \right)        \label{differentialoperator}     \\  
    &=  k  \frac{z_1^q -1}{z_1-1} \frac{z_2^q -1}{z_2-1}   \cdots  \left( z_j + 2z_j^2 + 3z_j^3 + \cdots + (q-1)z_j^{q-1} \right) 
   \cdots   \frac{z_d^q -1}{z_d-1}, 
\end{align}

\noindent
We now let all $z_j =1$, obtaining
\begin{align}
 &    \left(\sum_{e \in E  } e_j  \right)|A| 
 +  \left(\sum_{a \in A} a_j  \right)  |E|
    =  k  q^{d-1} \frac{ (q-1)q}{2}        \mod q\\
    &=  0   \mod q.
\end{align}

\vskip.125in 

Putting together all $d$ of these identities (one for each index $j$) into vector form, we obtain the desired result.

\vskip.125in 

\subsection{Proof of Corollary   \ref{sumpolysex2}}

We may now continue to differentiate equation  \eqref{differentialoperator}, applying to it the operator 
$z_j \frac{\partial}{\partial z_j}$ once again.

\begin{align}
 &    \left(\sum_{e \in E  } e_j^2 z^e \right) \left(\sum_{a \in A} z^a \right)  
 + 2  \left(\sum_{e \in E  } e_j z^e \right) \left(\sum_{a \in A} a_j z^a \right) 
 +  \left(\sum_{e \in E  } z^e \right) \left(\sum_{a \in A} a_j^2 z^a \right)   \\  
    &=  k  \frac{z_1^q -1}{z_1-1} \frac{z_2^q -1}{z_2-1}   
    \cdots  \left( z_j + 2^2 z_j^2 + 3^2 z_j^3 + \cdots + (q-1)^2 z_j^{q-1} \right) 
   \cdots   \frac{z_d^q -1}{z_d-1}, 
\end{align}

\noindent
Again specializing to all $z_j =1$, we obtain
\begin{align}
 &    \left(\sum_{e \in E  } e_j^2  \right)|A| 
 +  \left(\sum_{a \in A} a_j^2  \right)  |E| 
 +  2 \left(\sum_{e \in E  } e_j \right)    \left(\sum_{a \in A} a_j  \right) \\
     &=  k  q^{d-1}   \left( 1 + 2^2  + 3^2  + \cdots + (q-1)^2  \right)     \mod q  \\
     &=  0   \mod q.
\end{align}

We note that we may continue to get arbitrarily many identities by differentiating arbitrarily many times with respect to each $z_j$.

\appendix

\section{Fourier Transform over prime fields of rational-valued functions}
\label{section: rational}

\subsection{Galois characterization}

In this appendix we record some further properties unique to Fourier transforms over prime fields of rational-valued functions. We first set up some notation.

Let $p$ be a prime, $\xi$ a primitive $p$th root of unity in $\mathbb{C}$. The cyclotomic extension $\mathbb{Q}(\xi)$ has degree $p-1$ over $\mathbb{Q}$ 
where an element of $\mathbb{Q}(\xi)$ can be written uniquely as 
$$a_0 + a_1 \xi + \dots + a_{p-2} \xi^{p-2} \ \text{with} \ a_j \in \mathbb{Q}.$$ Further powers of 
$\xi$ can always be reduced using the minimal polynomial relationship $1 + \xi + \dots + \xi^{p-1}=0$. 

It is well-known that the Galois group of this extension, $Gal(\mathbb{Q}(\xi)/\mathbb{Q})$ is isomorphic to the multiplicative group $\mathbb{F}_p^*$ where 
$g_r$, the Galois automorphism corresponding to $r \in \mathbb{F}_p^*$ is the field automorphism given by $g_r(1)=1, g_r(\xi)=\xi^r$. Thus 
for example 
$$g_r(\xi^j)=\xi^{rj} \ 1 \leq j \leq p-1.$$

Finally we identify the group algebra $\mathbb{Q}[\mathbb{F}_p^d]$ with the set of rational-valued functions on $(\mathbb{F}_p^d,+)$. It is well 
known that convolution of functions corresponds to group algebra multiplication though we will not use this but just use the notation for this set out of convenience.

Our next result completely characterizes the Fourier transform of rational-valued functions on $\mathbb{F}_p^d$:

\begin{theorem}[Characterization of Fourier transform of rational-valued functions]
\label{thm: rationalFourier}
 Fix $p$ a prime and let $\{ g_r \}_{r \in \mathbb{F}_p^*}$ denote 
$Gal(\mathbb{Q}(\xi)/\mathbb{Q})$.
Let $f: \mathbb{F}_p^d \to \mathbb{Q}$ be a rational-valued function. Let $m \in \mathbb{F}_p^d - \{ 0 \}$ be a nonzero vector. Then for all 
$r \in \mathbb{F}_p^*$ we have:
$$ \hat{f}(rm)=g_r(\hat{f}(m)). $$

In particular $\hat{f}(m) = 0$ implies $\hat{f}(rm)=0$ for all $r \in \mathbb{F}_p^*$.Furthermore if we choose a set $M \subseteq \mathbb{F}_p^d$ such that $M$ contains exactly one nonzero element from each line through the origin 
(so $|M| = \frac{p^d-1}{p-1}$) and set 
$$Ave(f) = \frac{1}{p^d} \sum f(x)  \in \mathbb{Q}=\hat{f}(0)$$ to be the average of $f$ then the map
$$ \Phi: \mathbb{Q}[\mathbb{F}_p^d] \to \mathbb{Q} \times \mathbb{Q}(\xi)^{|M|} $$ given by
$$\Phi(f)=(\hat{f}(0), (\hat{f}(m))_{m \in M})$$ is a $\mathbb{Q}$-vector space isomorphism.
\end{theorem}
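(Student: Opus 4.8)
The plan is to establish the Galois-equivariance identity first, and then deduce the isomorphism from it by a dimension count together with Fourier inversion. The first claim is really the engine driving everything else, so I would prove it by direct computation.

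Starting from the explicit formula $\hat{f}(m) = p^{-d} \sum_{x \in \mathbb{F}_p^d} \xi^{x \cdot m} f(x)$ — using the normalization $\chi(-t) = \xi^t$ fixed in Proposition~\ref{proposition: equivalentFouriervanish} — I observe that since $f$ is rational-valued and $p^{-d} \in \mathbb{Q}$, every scalar in this sum is fixed by $g_r$. Hence applying $g_r$ commutes past the coefficients and acts only on the roots of unity: $g_r(\hat{f}(m)) = p^{-d} \sum_x g_r(\xi^{x \cdot m}) f(x) = p^{-d} \sum_x \xi^{r(x \cdot m)} f(x)$. The key point is that $r(x \cdot m) = x \cdot (rm)$ as elements of $\mathbb{F}_p$, so the right-hand side is precisely $\hat{f}(rm)$. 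This gives $\hat{f}(rm) = g_r(\hat{f}(m))$ for all $r \in \mathbb{F}_p^*$, and the stated vanishing propagation is the special case $\hat{f}(m)=0$.

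For the isomorphism, I would first verify that $\Phi$ is well-defined and $\mathbb{Q}$-linear: $\hat{f}(0) = p^{-d} \sum_x f(x) = Ave(f) \in \mathbb{Q}$, each $\hat{f}(m)$ is a $\mathbb{Q}$-combination of powers of $\xi$ and so lies in $\mathbb{Q}(\xi)$, and linearity is immediate from linearity of the Fourier transform. The decisive step is the dimension count: the source $\mathbb{Q}[\mathbb{F}_p^d]$ has $\mathbb{Q}$-dimension $p^d$, while the target $\mathbb{Q} \times \mathbb{Q}(\xi)^{|M|}$ has dimension $1 + |M|(p-1) = 1 + (p^d - 1) = p^d$, using $|M| = \frac{p^d-1}{p-1}$ and $[\mathbb{Q}(\xi):\mathbb{Q}] = p-1$. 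Since both spaces have equal finite dimension, it suffices to prove injectivity.

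Injectivity is exactly where the first part does its work. Suppose $\Phi(f) = 0$, so $\hat{f}(0) = 0$ and $\hat{f}(m) = 0$ for every $m \in M$. The Galois-equivariance identity then forces $\hat{f}(rm) = g_r(0) = 0$ for all $r \in \mathbb{F}_p^*$ and $m \in M$; since $M$ meets each line through the origin in one nonzero point, the orbits $\{rm : r \in \mathbb{F}_p^*\}$ cover $\mathbb{F}_p^d \setminus \{0\}$, so $\hat{f}$ vanishes identically and Fourier inversion gives $f \equiv 0$. Thus $\Phi$ is an injective $\mathbb{Q}$-linear map between spaces of equal finite dimension, hence an isomorphism. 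I do not expect a serious obstacle; the only conceptual point requiring care is recognizing that recording a single representative per line loses no information precisely because the Galois relations recover the remaining Fourier values, which is what makes the dimension bookkeeping balance at $p^d$ on both sides.
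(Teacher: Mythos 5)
Your proposal is correct and follows essentially the same route as the paper: the same direct computation showing $g_r$ fixes the rational coefficients and permutes the characters to give $\hat{f}(rm)=g_r(\hat{f}(m))$, followed by the same dimension count ($1+|M|(p-1)=p^d$) and injectivity via Galois conjugation recovering all Fourier coefficients and then Fourier inversion. Your phrasing of injectivity through the kernel of $\Phi$ is just a slightly more explicit version of the paper's argument, with no substantive difference.
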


\vskip.125in 

\begin{corollary} \label{cor: rationalTrace} Let $f \in \mathbb{Q}[\mathbb{F}_p^d]$ be a rational-valued function on $\mathbb{F}_p^d$. Then for any $m \in \mathbb{F}_p^d \backslash \{ 0\}$ we have 
$$ \sum_{r \in \mathbb{F}_p^*} \hat{f}(rm) = Tr(\hat{f}(m)) \in \mathbb{Q} $$ where $Tr: \mathbb{Q}(\xi) \to \mathbb{Q}$ is the Galois trace characterized as the $\mathbb{Q}$-linear map such that  $Tr(1)=p-1, Tr(\xi^j)=-1$ for $1 \leq j \leq p-1$. We also have 
$$ \sum_{r \in \mathbb{F}_p^*} |\hat{f}(rm)|^2 = Tr(|\hat{f}(m)|^2) \in \mathbb{Q}. $$
\end{corollary}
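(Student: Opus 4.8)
The plan is to read off both identities directly from Theorem \ref{thm: rationalFourier}, which gives $\hat{f}(rm) = g_r(\hat{f}(m))$ for every $r \in \mathbb{F}_p^*$, together with the elementary fact that the Galois trace is the sum over the whole Galois group.

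For the first identity, I would substitute this relation into the left-hand side to obtain $\sum_{r \in \mathbb{F}_p^*} \hat{f}(rm) = \sum_{r \in \mathbb{F}_p^*} g_r(\hat{f}(m))$. Since $\{g_r\}_{r \in \mathbb{F}_p^*}$ is exactly the full group $Gal(\mathbb{Q}(\xi)/\mathbb{Q})$, this sum is by definition the Galois trace $Tr(\hat{f}(m))$, which lies in $\mathbb{Q}$ because it is fixed by every $g_s$. To reconcile this with the stated normalization of $Tr$, I would check $Tr(1) = \sum_r g_r(1) = p-1$ and, for $1 \leq j \leq p-1$, $Tr(\xi^j) = \sum_r \xi^{rj} = \sum_{s \in \mathbb{F}_p^*} \xi^s = -1$, using $\sum_{s \in \mathbb{F}_p} \xi^s = 0$.

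The second identity is slightly more delicate, since we must commute the absolute value past the Galois action, i.e.\ establish $|\hat{f}(rm)|^2 = g_r(|\hat{f}(m)|^2)$. The key observation is that complex conjugation restricts to the automorphism $g_{-1}$ on $\mathbb{Q}(\xi)$, because $\overline{\xi} = \xi^{-1}$. As $f$ is rational-valued, hence real, one checks from the definition of the Fourier transform that $\overline{\hat{f}(m)} = \hat{f}(-m) = g_{-1}(\hat{f}(m))$; in particular $|\hat{f}(m)|^2 = \hat{f}(m)\, g_{-1}(\hat{f}(m))$ already lies in $\mathbb{Q}(\xi)$, so its trace is defined. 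Writing $z = \hat{f}(m)$ and using that $Gal(\mathbb{Q}(\xi)/\mathbb{Q})$ is abelian (indeed cyclic of order $p-1$), I would compute $g_r(|z|^2) = g_r(z)\, g_r(g_{-1}(z)) = g_r(z)\, g_{-1}(g_r(z)) = g_r(z)\, \overline{g_r(z)} = |g_r(z)|^2 = |\hat{f}(rm)|^2$. Summing over $r \in \mathbb{F}_p^*$ then yields $\sum_{r} |\hat{f}(rm)|^2 = \sum_r g_r(|z|^2) = Tr(|z|^2)$, as claimed.

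The only genuine obstacle is the interchange step in the second identity; everything else is a direct application of the theorem plus the character-sum evaluation of the trace. The interchange rests on two points I would be careful to record: that conjugation coincides with $g_{-1}$ on $\mathbb{Q}(\xi)$ (so that $|z|^2$ is an honest element of $\mathbb{Q}(\xi)$ on which $Tr$ acts), and that the Galois group is commutative, which is precisely what lets $g_r$ slide past $g_{-1}$.
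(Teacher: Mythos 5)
Your proposal is correct and follows essentially the same route as the paper: both deduce the first identity by summing the Galois relation $\hat{f}(rm)=g_r(\hat{f}(m))$ over the full group, and both prove the second by observing that complex conjugation is itself a Galois automorphism of $\mathbb{Q}(\xi)$ and commutes with each $g_r$ since the group is abelian, so that $|g_r(z)|^2=g_r(|z|^2)$. Your explicit identification of conjugation with $g_{-1}$ is a slightly cleaner packaging that even handles $p=2$ uniformly, where the paper instead treats $p=2$ as a separate trivial case.
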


\vskip.125in 

\begin{corollary} \label{cor: average} Let $f \in \mathbb{Q}[\mathbb{F}_p^d]$ and let $\mu = Ave(f)=\hat{f}(0)$. For nonzero vector $m \in \mathbb{F}_p^d$ let 
$\mu_m(t)$ be the average of $f$ over the affine hyperplane $H_{m,t}=\{ x: x \cdot m =  t \}$ i.e., 
$$ \mu_m(t) = \frac{1}{p^{d-1}}\sum_{x \in H_{m,t} } f(x) \in \mathbb{Q}.$$ Then 
$$ {\Bbb E}[\mu_m] = \frac{1}{p} \sum_{t \in \mathbb{F}_p} \mu_m(t) = \mu \in \mathbb{Q}$$
and
$$ Var[\mu_m] = \frac{1}{p} \sum_{t \in \mathbb{F}_p} (\mu_m(t)-\mu)^2 = E[\mu_m^2] - E[\mu_m]^2 \in \mathbb{Q} $$
satisfy 
$$\hat{f}(m) = \frac{1}{p}\sum_{j=0}^{p-1} \mu_m(t) \xi^t \in \mathbb{Q}(\xi)$$ and 
$$ \frac{1}{p^d} \sum_{x \in \mathbb{F}_p^d} |f(x)|^2  = \mu^2 + \sum_{m \in M} Var[\mu_m] $$ where $M$ is a set that contains exactly one nonzero element from each line through the origin so $|M| = \frac{p^d-1}{p-1}$. Furthermore, 
$$Var[\mu_m] = Tr(|\hat{f}(m)|^2)=\sum_{r \in \mathbb{F}_p^*} |\hat{f}(rm)|^2$$ for all nonzero vectors $m$. \end{corollary}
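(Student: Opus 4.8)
The plan is to derive every assertion from a single structural observation: organizing the sum defining $\hat{f}$ according to the level sets $H_{m,t}$ of the linear functional $x \mapsto x \cdot m$ turns the $d$-dimensional Fourier transform along the direction $m$ into a one-dimensional Fourier transform of the averaging function $\mu_m$ on the cyclic group $\mathbb{F}_p$. First I would prove the Fourier--hyperplane formula $\hat{f}(m) = \frac{1}{p}\sum_{t \in \mathbb{F}_p}\mu_m(t)\xi^t$. Using the appendix convention $\chi(-x\cdot m) = \xi^{x\cdot m}$, I split $\hat{f}(m) = p^{-d}\sum_x \xi^{x\cdot m}f(x)$ over the $p$ fibers $H_{m,t}$; on each fiber $x\cdot m = t$ is constant, so the inner sum equals $\xi^t \sum_{x \in H_{m,t}}f(x) = \xi^t p^{d-1}\mu_m(t)$, giving the claimed identity. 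Summing this formula over $t$, or directly using that the hyperplanes partition $\mathbb{F}_p^d$, yields $\mathbb{E}[\mu_m] = \frac{1}{p}\sum_t \mu_m(t) = p^{-d}\sum_x f(x) = \mu$, the first displayed equation; the variance identity $\mathrm{Var}[\mu_m] = E[\mu_m^2] - E[\mu_m]^2$ is then just its definition.

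The heart of the argument is the identity $\mathrm{Var}[\mu_m] = \sum_{r \in \mathbb{F}_p^*}|\hat{f}(rm)|^2$. The key step is to compute $\hat{f}(rm)$ for every $r \in \mathbb{F}_p^*$ in terms of the single function $\mu_m$. Since $x\cdot(rm) = t$ iff $x \cdot m = r^{-1}t$, one has $\mu_{rm}(t) = \mu_m(r^{-1}t)$; feeding this into the Fourier--hyperplane formula and reindexing gives $\hat{f}(rm) = \frac{1}{p}\sum_{s \in \mathbb{F}_p}\mu_m(s)\xi^{rs}$. Thus, as $r$ ranges over $\mathbb{F}_p$, the numbers $\hat{f}(rm)$ are exactly the values of the one-dimensional discrete Fourier transform of $\mu_m$ on $\mathbb{F}_p$, with the $r=0$ value equal to $\mu$. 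Applying one-dimensional Plancherel to $\mu_m$ gives $\sum_{r \in \mathbb{F}_p}|\hat{f}(rm)|^2 = \frac{1}{p}\sum_s |\mu_m(s)|^2 = E[\mu_m^2]$; isolating the $r=0$ term $|\mu|^2$ and subtracting yields $\sum_{r \neq 0}|\hat{f}(rm)|^2 = E[\mu_m^2] - \mu^2 = \mathrm{Var}[\mu_m]$. The equality with $\mathrm{Tr}(|\hat{f}(m)|^2)$ is then immediate from Corollary~\ref{cor: rationalTrace}, completing the $\mathrm{Var}[\mu_m]$ statement.

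Finally I would assemble the global Plancherel identity. Equation~\eqref{plancherel} gives $p^{-d}\sum_x |f(x)|^2 = \sum_{m \in \mathbb{F}_p^d}|\hat{f}(m)|^2$; the $m = 0$ term contributes $|\hat{f}(0)|^2 = \mu^2$, and partitioning the nonzero frequencies into the lines through the origin represented by $M$ gives $\sum_{m\neq 0}|\hat{f}(m)|^2 = \sum_{m \in M}\sum_{r \in \mathbb{F}_p^*}|\hat{f}(rm)|^2 = \sum_{m \in M}\mathrm{Var}[\mu_m]$ by the previous paragraph. This is precisely the asserted decomposition $p^{-d}\sum_x |f(x)|^2 = \mu^2 + \sum_{m\in M}\mathrm{Var}[\mu_m]$.

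The argument is essentially bookkeeping; the only genuine idea — and the step most prone to sign and normalization errors — is recognizing that the restriction of $\hat{f}$ to a line through the origin is the one-dimensional Fourier transform of the hyperplane-average function $\mu_m$. The main care required is therefore to keep the two Plancherel identities (in dimension one and in dimension $d$) with mutually consistent character conventions ($\chi(-t)=\xi^t$) and $1/p$ normalizations, and to justify the rescaling $\mu_{rm}(t)=\mu_m(r^{-1}t)$ that lets a single function $\mu_m$ control the whole line $\{rm\}$.
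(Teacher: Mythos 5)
Your proof is correct, and its skeleton (the Fourier--hyperplane formula, the global Plancherel decomposition over lines through the origin, and the appeal to Corollary~\ref{cor: rationalTrace}) matches the paper's. But the central computation is done by a genuinely different route. The paper establishes $Var[\mu_m]=Tr(|\hat{f}(m)|^2)$ by brute Galois arithmetic: it expands $|\hat{f}(m)|^2 = p^{-2}\sum_{s,t}\mu_m(t)\mu_m(s)\xi^{t-s}$, applies the $\mathbb{Q}$-linear trace term by term using $Tr(1)=p-1$ and $Tr(\xi^j)=-1$, and separates diagonal from off-diagonal terms; the identity $Var[\mu_m]=\sum_{r\in\mathbb{F}_p^*}|\hat{f}(rm)|^2$ then comes from Corollary~\ref{cor: rationalTrace}. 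You instead prove $Var[\mu_m]=\sum_{r\in\mathbb{F}_p^*}|\hat{f}(rm)|^2$ directly, by observing that $r\mapsto\hat{f}(rm)$ is precisely the one-dimensional Fourier transform of $\mu_m$ on $\mathbb{F}_p$ (via $\mu_{rm}(t)=\mu_m(r^{-1}t)$ and reindexing) and invoking one-dimensional Plancherel, and you only use Corollary~\ref{cor: rationalTrace} afterwards to attach the trace to the chain of equalities. Your route is a finite-field projection--slice argument: it avoids cyclotomic trace values entirely for the variance computation, needs no Galois theory beyond what the trace equality itself requires, and makes the conceptual content (restriction of $\hat{f}$ to a line equals the Fourier transform of the hyperplane averages) explicit; the paper's route stays inside the cyclotomic-field framework of the appendix and produces the trace identity as the primary object rather than as a corollary. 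Both are complete and consistent with the paper's normalization $\chi(-t)=\xi^t$, which you correctly carry through the one-dimensional Plancherel step.
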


\vskip.125in 

\subsection{Proof of Theorem \ref{thm: rationalFourier}} We have 
$\hat{f}(m) = \frac{1}{p^d} \sum_{x \in \mathbb{F}_p^d} f(x)\chi(-x \cdot m)$ where $\chi(-1)=\xi$. Therefore 
$$\hat{f}(m) = \frac{1}{p^d} \sum_x f(x)\xi^{x \cdot m} \in \mathbb{Q}(\xi).$$

Applying the Galois automorphism $g_r$ (which fixes $f(x)$ as $f(x) \in \mathbb{Q}$), we then get
$$ g_r(\hat{f}(m)) = \frac{1}{p^d} \sum_x f(x) (\xi^r)^{x \cdot m} = \frac{1}{p^d} \sum_x f(x) \xi^{x \cdot (rm)} = \hat{f}(rm) $$
as desired. Now the map $\Phi$ described in the theorem is clearly $\mathbb{Q}$-linear. It is injective as the collection 
$\hat{f}(0), \{ \hat{f}(m) \}_{m \in M}$ determines all the Fourier coefficients $\hat{f}(m)$ by Galois conjugation and hence determines $f$ by Fourier inversion.

Counting $\mathbb{Q}$-dimensions of domain and codomain of $\Phi$ we find that the $\mathbb{Q}$-dimension of the domain is $p^d$ whilst the $\mathbb{Q}$-dimension of codomain is 
$$1 + |M| dim_{\mathbb{Q}} \mathbb{Q}(\xi) = 1 + \frac{p^d-1}{p-1} (p-1) = p^d$$ and so their dimensions agree 
and $\Phi$ is a $\mathbb{Q}$-vector space isomorphism.

\vskip.125in 

\subsection{Proof of Corollary \ref{cor: rationalTrace}} 

By Theorem~\ref{thm: rationalFourier}, 
$$ \sum_{r \in \mathbb{F}_p^*} \hat{f}(rm) = \sum_{r \in \mathbb{F}_p^*} g_r (\hat{f}(m)) = Tr(\hat{f}(m)) $$
where the second equality follows from the definition of the Galois trace map and the fact that $\hat{f}(m) \in \mathbb{Q}(\xi)$.
Since $\{ \xi^j, 1 \leq j \leq p-1 \}$ is a complete set of galois conjugates over $\mathbb{Q}$, $Trace(\xi^j) = \xi + \dots + \xi^{p-1} = -1$ for any $1 \leq j \leq p-1$. As $1$ is in the base field, $Tr(1)=dim_\mathbb{Q}(\mathbb{Q}(\xi)) \cdot 1 = p-1$.

Finally note that when $p$ is odd, complex conjugation is in $Gal(\mathbb{Q}(\xi)/\mathbb{Q}) \cong \mathbb{F}_p^*$ and hence commutes with the $g_r$. On the other hand, when $p=2$ complex conjugation acts trivially on $\hat{f}(m) \in \mathbb{Q}$ and can be ignored. Thus we obtain 

\begin{eqnarray*}
\sum_{r \in \mathbb{F}_p^*} |\hat{f}(rm)|^2 &=& \sum_{r \in \mathbb{F}_p^*} \hat{f}(rm) \overline{\hat{f}(rm)} \\
&=& \sum_{r \in \mathbb{F}_p^*} g_r(\hat{f}(m)) \overline{g_r(\hat{f}(m))} \\
&=& \sum_{r \in \mathbb{F}_p^*} g_r(\hat{f}(m) \overline{\hat{f}(m)}) \\
&=& Tr(|\hat{f}(m)|^2)
\end{eqnarray*}

Note also that $|\hat{f}(m)|^2$ will live in the maximal real subfield of $\mathbb{Q}(\xi)$ over which $\mathbb{Q}(\xi)$ has degree two when $p$ is odd. 

\subsection{Proof of Corollary \ref{cor: average}} 

An easy computation establishes $\frac{1}{p} \sum_{t \in \mathbb{F}_p} \mu_m(t) = \mu$. Now note after setting $\chi(-1)=\xi$ as usual we get
$$ \hat{f}(m) = \frac{1}{p^d} \sum_{x} f(x) \xi^{m \cdot x} = \frac{1}{p^d} \sum_{t \in \mathbb{F}_p} \sum_{x \in H_{m,t}} f(x) \xi^t = \frac{1}{p} \sum_{t \in \mathbb{F}_p} \mu_m(t) \xi^t
$$
as claimed. By Plancherel's Theorem we get:

\begin{eqnarray*}
\frac{1}{p^d} \sum_{x \in \mathbb{F}_p^d} |f(x)|^2 &=& \sum_{m \in \mathbb{F}_p^d} |\hat{f}(m)|^2 \\
&=& |\hat{f}(0)|^2 + \sum_{m \in M} \sum_{r \in \mathbb{F}_p^*} |\hat{f}(rm)|^2 \\
&=& \mu^2 + \sum_{m \in M} Tr(|\hat{f}(m)|^2) \text{ by Corollary~\ref{cor: rationalTrace}}
\end{eqnarray*}

Now as we compute 

$$|\hat{f}(m)|^2 = (p^{-1} \sum_t \mu_m(t) \xi^t) (p^{-1} \sum_s \mu_m(s) \xi^{-s}) = p^{-2} \sum_{s,t} \mu_m(t)\mu_m(s) \xi^{t-s}$$
Applying $Tr$ to both sides and separating the terms where $t=s$ and where $t \neq s$ we get:

\begin{eqnarray*}
Tr(|\hat{f}(m)|^2) &=& p^{-2} \sum_t \mu_m(t)^2 (p-1) + p^{-2} \sum_{s \neq t} \mu_m(t)\mu_m(s) (-1) \\
&=& p^{-1} \sum_t \mu_m(t)^2 - p^{-2} \sum_{s,t} \mu_m(t)\mu_m(s) \\
&=& p^{-1} \sum_t \mu_m(t)^2 -  \mu^2 \\
&=& ( E[\mu_m^2] - E[\mu_m]^2) = Var[\mu_m] \\
\end{eqnarray*}

Plugging this into the last expression of Plancherel's Theorem completes the proof.

\vskip.125in

\enddocument